\theoremstyle{definition}
\newtheorem{definition}{Definition}[section]
\theoremstyle{plain}
\newtheorem{theorem}[definition]{Theorem}
\newtheorem{proposition}[definition]{Proposition}
\newtheorem{lemma}[definition]{Lemma}
\newtheorem{corollary}[definition]{Corollary}
\theoremstyle{remark}
\newtheorem{remark}[definition]{Remark}
\numberwithin{equation}{section}
\newcommand{\Uq}{\mathrm{U}_q}
\begin{document}

\title{Braided tensor products and polynomial invariants for the quantum queer superalgebra}
\author{Zhihua Chang ${}^1$ and Yongjie Wang ${}^2$\footnote{Corresponding Author: Y. Wang (Email: wyjie@mail.ustc.edu.cn)}}
\maketitle

\begin{center}
\footnotesize
\begin{itemize}
\item[1] School of Mathematics, South China University of Technology, Guangzhou, Guangdong, 510640, China.
\item[2] School of Mathematics, Hefei University of Technology, Hefei, Anhui, 230009, China.
\end{itemize}
\end{center}
\begin{abstract} 
The classical invariant theory for the queer Lie superalgebra $\mathfrak{q}_n$ investigates its invariants in the supersymmetric algebra 
$$\mathcal{U}_{s,l}^{r,k}:=\mathrm{Sym}\left(V^{\oplus r}\oplus \Pi(V)^{\oplus k}\oplus V^{*\oplus s}\oplus \Pi(V^*)^{\oplus l} \right),$$ 
where $V=\mathbb{C}^{n|n}$ is the natural supermodule, $V^*$ is its dual and $\Pi$ is the parity reversing functor. This paper aims to construct a quantum analogue $\mathcal{B}^{r,k}_{s,l}$ of $\mathcal{U}_{s,l}^{r,k}$ and to explore the quantum queer superalgebra $\Uq(\mathfrak{q}_n)$-invariants in $\mathcal{B}^{r,k}_{s,l}$.

 The strategy involves braided tensor products of the quantum analogues $\mathsf{A}_{r,n}$, $\mathsf{A}_{k,n}^{\Pi}$ of the supersymmetric algebras $\mathrm{Sym}\left(V^{\oplus r}\right)$, $\mathrm{Sym}\left(\Pi(V)^{\oplus k}\right)$, and their dual partners $\bar{\mathsf{A}}_{s,n}$, and $\bar{\mathsf{A}}_{l,n}^{\Pi}$. These braided tensor products are defined using explicit braiding operator due to the absence of a universal R-matrix for $\Uq(\mathfrak{q}_n)$. Furthermore, 
 we obtain an isomorphism between the braided tensor product $\mathsf{A}_{r,n}\otimes\mathsf{A}_{k,n}$ and $\mathsf{A}_{r+k,n}$, an isomorphism between $\mathsf{A}_{k,n}^{\Pi}$ and $\mathsf{A}_{k,n}$, as well as the corresponding isomorphisms for their dual parts. Consequently, the $\Uq(\mathfrak{q}_n)$-supermodule superalgebra $\mathcal{B}^{r,k}_{s,l}$ is identified with $\mathcal{B}^{r+k,0}_{s+l,0}$. This allows us to obtain a set of generators of $\Uq(\mathfrak{q}_n)$-invariants in $\mathcal{B}^{r,k}_{s,l}$.
\bigskip

\noindent\textit{MSC(2020):} 17B37, 16T20, 20G42.
\bigskip

\noindent\textit{Keywords:} Quantum queer superalgebra; Braided tensor product; Invariant theory.
\end{abstract}

\section{Introduction}
Invariant theory is one of the most inspiring themes in representation theory. It has been studied extensively for classical (super)group \cite{DLZ18, Howe95, LZ12, LZ15, LZ17, LZ21, Weyl}, Lie (super)algebra \cite{CW12, Howe95, Sergeev01}, quantum groups \cite{LZZ11, LSS22, XYZ20}, and quantum superalgebras \cite{CW23, LZZ20, Savage22, Zhangyang20}. Invariants of quantum superalgebras are not merely for mathematical interest but is also physically important.  The exploration of invariants in supersymmetric algebras under an action of a supergroup usually corresponds to observables in physical theories.

Inspired by Howe's unified approach \cite{Howe95} to investigate classical invariant theory, Sergeev obtained polynomial invariants of matrix Lie superalgebras in \cite{Sergeev01}. For the universal enveloping superalgebra $\mathrm{U}(\mathfrak{q}_n)$ of type $Q$, its invariants in the supersymmetric algebra 
$$\mathcal{U}^{r,k}_{s,l}:=\mathrm{Sym}\left(V^{\oplus r}\oplus \Pi(V)^{\oplus k}\oplus V^{*\oplus s}\oplus \Pi(V^*)^{\oplus l} \right),$$
were explicitly described, where $V$ is the natural $\mathrm{U}(\mathfrak{q}_n)$-supermodule. The $\mathrm{U}(\mathfrak{q}_n)$-supermodule $V^{\oplus r}\oplus \Pi(V)^{\oplus k}\oplus V^{*\oplus s}\oplus \Pi(V^*)^{\oplus l}$ is identified with $\left(\mathbb{C}^{r|k}\otimes V\right)\oplus\left(V^*\otimes\mathbb{C}^{s|l}\right)$. It is pointed out in \cite{Sergeev01} that $\mathbb{C}^{r|k}\otimes V$ can be further identified with $\mathbb{C}^{r+k|r+k}\circledast V$ that is one of the two isomorphic irreducible sub-$\mathrm{U}(\mathfrak{q}_{r+k})\otimes\mathrm{U}(\mathfrak{q}_n)$-supermodules of $\mathbb{C}^{r+k|r+k}\otimes V$. This leads to a $\mathrm{U}(\mathfrak{q}_{r+k})$-supermodule structure on $\mathbb{C}^{r|k}\otimes V$. Moreover, $\mathrm{Sym}\left(\mathbb{C}^{r+k|r+k}\circledast V\right)$ admits a multiplicity-free decomposition as a $\mathrm{U}(\mathfrak{q}_{r+k})\otimes\mathrm{U}(\mathfrak{q}_n)$-supermodule by the Howe duality \cite{CW12}. 
This procedure is also valid for the dual part. Thus the superalgebra $\mathcal{U}^{r,k}_{s,l}$ is identified with $\mathcal{U}^{r+k,0}_{s+l,0}$.

The purpose of this paper is to illustrate such an isomorphism in quantum case, and obtain a set of generators of the quantum queer superalgebra $\mathrm{U}_q(\mathfrak{q}_n)$-invariants in a quantum analogue  $\mathcal{B}^{r,k}_{s,l}$ of the supersymmetric algebra  $\mathcal{U}^{r,k}_{s,l}$.

A quantum deformation of $\mathrm{U}(\mathfrak{q}_n)$ was constructed in G. Olshanski's letter \cite{Olshanski92} by using Faddeev-Reshetikhin-Takhtajan (FRT) formalism. Since the queer Lie superalgebra does not have an even, non-degenerate, invariant bilinear form, the $r$-matrix $r\in\mathfrak{q}_n^{\otimes 2}$ does not satisfy the classical Yang-Baxter equation, thus the quantum queer superalgebra $\mathrm{U}_q(\mathfrak{q}_n)$ is not a quasi-triangular Hopf superalgebra.
The highest weight representation theory and their crystal basis theory  for quantum queer superalgebra were investigated in \cite{ GJKK14, GJKKK15, GJKK10}. The authors established a Howe duality  for quantum queer superalgebras by introducing a  quantum coordinate superalgebra \cite{CW20}, which is isomorphic to a braided supersymmetric algebra.  Based on the Howe duality, we established in \cite{CW23} a first fundamental theorem (FFT) of invariant theory for $\mathrm{U}_q(\mathfrak{q}_n)$ acting on a quantum analogue $\mathcal{O}_{r,s}$ of $\mathrm{Sym}\left(V^{\oplus r}\oplus V^{*\oplus s}\right)$.

Since the quantum queer superalgebra does not possess a universal $\mathcal{R}$-matrix, the technique developed in establishing  the first fundamental theorem of invariant theory for quantum groups \cite{LZZ11} could not be used in the quantum queer superpalgebra setting. Our approach to obtain the invariants for quantum queer superalgebra takes advantage of an explicit braiding operator $\Upsilon$ between a quantum coordinate superalgebra and a dual quantum coordinate superalgebra. The braiding operator enables us to define a braided tensor product, or $R$-twisted tensor products over $\mathcal{O}_{r,s}=\mathsf{A}_{r,n}\otimes\bar{\mathsf{A}}_{s,n}$ (that is, $\mathcal{B}^{r,0}_{s,0}$). More important, it is compatible with the $\mathrm{U}_q(\mathfrak{q}_n)$-action. The definition of the braiding operator is based on the $S$-matrix on the natural representation instead of the universal $\mathcal{R}$-matrix. One of the nice aspects of our approach is that all relations could be expressed via matrics, this allows us to avoid a lot of complicated calculations, see \cite{CW23} for more details.  

In this paper, we continue to utilize this advantage to investigate the $\mathrm{U}_q(\mathfrak{q}_n)$-invariants in $\mathcal{B}^{r,k}_{s,l}$. We extend the work of \cite[Theorem~1.5.1]{Sergeev01} to quantum queer superalgebra. On one hand, we explicitly construct a braiding operator $\Theta:\mathsf{A}_{k,n}\otimes\mathsf{A}_{r,n}\rightarrow\mathsf{A}_{r,n}\otimes\mathsf{A}_{k,n}$ (resp. $\bar{\Theta}:\bar{\mathsf{A}}_{l,n}\otimes\bar{\mathsf{A}}_{s,n}\rightarrow\bar{\mathsf{A}}_{s,n}\otimes\bar{\mathsf{A}}_{l,n}$),
and prove that the resulting braided tensor product of $\mathsf{A}_{r,n}\otimes\mathsf{A}_{k,n}$ (resp. $\bar{\mathsf{A}}_{s,n}\otimes\bar{\mathsf{A}}_{l,n}$) is isomorphic to $\mathsf{A}_{r+k,n}$ (resp. $\bar{\mathsf{A}}_{s+l,n}$). Thus, we obtain that $\mathsf{A}_{r,n}$ (resp. $\bar{\mathsf{A}}_{s,n}$) is isomorphic to the braided tensor product of $r$-copies of the quantum supersymmetric algebra $S_q(V)$ (resp. $S_q(V^*)$). On the other hand, we define a quantum analogue $\mathsf{A}_{k,n}^{\Pi}$ (resp. $\bar{\mathsf{A}}_{l,n}^{\Pi}$) of the supersymmetric algebra $\mathrm{Sym}(\Pi(V)^{\oplus k})$ (resp. $\mathrm{Sym}(\Pi(V^*)^{\oplus l})$ ), which is isomorphic to the $\Uq(\mathfrak{q}_n)$-supermodule superalgebra $\mathsf{A}_{k,n}$ (resp. $\bar{\mathsf{A}}_{l,n}$). This allows us to identify $\mathcal{B}^{r,k}_{s,l}=\mathsf{A}_{r,n}\otimes\mathsf{A}_{k,n}^{\Pi}\otimes\bar{\mathsf{A}}_{s,n}\otimes\bar{\mathsf{A}}_{l,n}^{\Pi}$ with $\mathcal{B}^{r+k,0}_{s+l,0}=\mathsf{A}_{r+k,n}\otimes\bar{\mathsf{A}}_{s+l,n}$. Combining with our pervious result \cite[Theorem~5.10]{CW23}, we derive generators of $\mathrm{U}_q(\mathfrak{q}_n)$-invariants in $\mathcal{B}^{r,k}_{s,l}$.

The remainder of this paper is organized as follows. We review some definitions related to braided tensor product of two superalgebras, the quantum queer superalgebra and $\mathrm{U}_q(\mathfrak{q}_n)$-supermodule superalgebra in Section~\ref{queer}.  We explicitly construct a braided tensor product $\mathsf{A}_{r,n}\otimes\mathsf{A}_{k,n}$ of two quantum coordinate superalgebras and identify it with $\mathsf{A}_{r+k,n}$ in Section~\ref{sec:qcs}. The definition of a quantum analogue $\mathsf{A}_{k,n}^{\Pi}$ of $\mathrm{Sym}\left(\Pi(V)^{\oplus k}\right)$ and an isomorphism from $\mathsf{A}_{k,n}^{\Pi}$ to $\mathsf{A}_{k,n}$ are also obtained  in this section. Section~\ref{sec:dqcsa} serves for discussing braided tensor products of dual quantum coordinate superalgebras and a quantum analogue of   $\mathrm{Sym}\left(\Pi(V^*)^{\oplus l}\right)$.  The $\Uq(\mathfrak{q}_n)$-invariants in a quantum analogue $\mathcal{B}^{r,k}_{s,l}$ of the supersymmetric algebra $\mathcal{U}^{r,k}_{s,l}$ are investigated  in Section~\ref{sec:invariant}.

\section{Quantum queer superalgebra and their coordinate superalgebra}
\label{queer}
Let $A$ and $B$ be two superalgebras over a commutative ring. A twisted tensor product {\cite[Definition~2.1]{CSV95}} of $A$ and $B$ is a superalgebra $C$, together with two injective homomorphisms $i_A: A\rightarrow C$ and $i_B: B\rightarrow C$, such that the canonical linear map $(i_A, i_B): A\otimes B\rightarrow C$ defined by $(i_A,i_B)(a\otimes b)=i_A(a)i_B(b)$ is a linear superalgebra isomorphism. 

Twisted tensor products can be characterized by twisting maps. According to \cite[Proposition~2.7]{CSV95}, for any twisted tensor product $(C, i_A,i_B)$ of two superalgebras $A$ and $B$, there exists a twisting map $\sigma: B\otimes A\rightarrow A\otimes B$ such that $C$ is isomorphic to $A\otimes B$ as superalgebras under the following multiplication $\mu_\sigma$
$$\mu_\sigma:=(\mu_A\otimes \mu_B)(\text{id}_A\otimes \sigma\otimes\text{id}_B),$$
where $\mu_A$ (resp. $\mu_B$) is the multiplication on $A$ (resp. $B$). Furthermore, if $\sigma$ satisfies the usual hexagon axioms, then $\sigma$ is a braiding operator in sense of \cite[Definition~8.1.1]{EGNO15}.  The twisted tensor product $A\otimes B$ is also called a braided tensor product.

For the quantum group $\mathrm{U}_q(\mathfrak{gl}_n)$ of type $A$, a braiding operator  of $\mathrm{U}_q(\mathfrak{gl}_n)$-module algebras is given by the composition of the permutation operator $P$ with the universal $\mathcal{R}$-matrix. However, the quantum superalgebra of type $Q$ does not have a universal $\mathcal{R}$-matrix as pointed out in \cite{Olshanski92}. In this paper,  we aim to construct an explicit braiding operator among quantum coordinate superalgebras and dual quantum coordinate superalgebras, see Propositions \ref{prop:interwa} and \ref{prop:interbara}.

We always assume that the base field $\mathbb{C}(q)$ is the field of rational functions in an indeterminate $q$. For a positive integer $n$, we denote $I_{n|n}:=\big\{-n,\ldots,-1,1,\ldots,n\big\}$. Let $V_q$ be the $2n$-dimensional $\mathbb{C}(q)$-vector superspace with the basis $\{v_i, i\in I_{n|n}\}$, which is equipped with a $\mathbb{Z}/2\mathbb{Z}$-grading
$$|v_i|=|i|:=\begin{cases}\bar{0},&\text{if }i>0,\\ \bar{1}&\text{if }i<0.\end{cases}$$
Then $\mathrm{End}(V_q)$ is naturally an associative superalgebra, in which the standard matrix unit $E_{ij}$ is of parity $|i|+|j|$ for $i,j\in I_{n|n}$. 

As in \cite{Olshanski92}, we set
\begin{align}
S:=&\sum\limits_{i,j\in I_{n|n}}q^{\varphi(i,j)}E_{ii}\otimes E_{jj}+\xi \sum\limits_{i<j }(-1)^{|i|}(E_{ji}+E_{-j,-i})\otimes E_{ij}\in\mathrm{End}(V_q)^{\otimes 2}\label{eq:smatrix}\\
=&\sum\limits_{i,j\in I_{n|n}}S_{ij}\otimes E_{ij},\nonumber
\end{align}
where $\delta_{ij}$ is standard Kronecker symbol, $\varphi(i,j)=(-1)^{|j|}(\delta_{ij}+\delta_{i,-j})$ and $\xi=q-q^{-1}$. It satisfies the quantum Yang-Baxter equation:
$$S^{12}S^{13}S^{23}=S^{23}S^{13}S^{12},$$
where 
$$S^{12}=S\otimes 1,\quad S^{23}=1\otimes S,\quad S^{13}=\sum\limits_{i,j\in I_{n|n}}S_{ij}\otimes 1\otimes E_{ij}.$$

\textit{The quantum queer superalgebra} is defined via Faddeev-Reshetikhin-Takhtajan presentation as follows:

\begin{definition}[{G. Olshanski \cite[Definition~4.2]{Olshanski92}}]
The quantum queer superalgebra $\Uq(\mathfrak{q}_n)$ is the unital associative superalgebra over $\mathbb{C}(q)$ generated by elements $L_{ij}$ of parity $|i|+|j|$ for $i,j\in I_{n|n}$ and $i\leqslant j$, with defining relations:
\begin{eqnarray}
&L_{ii}L_{-i,-i}=1=L_{-i,-i}L_{ii},&\\
&L^{[1]2}L^{[1]3}S^{23}=S^{23}L^{[1]3}L^{[1]2},&\label{eq:SLL}
\end{eqnarray}
where $L^{[1]2}=\sum\limits_{i\leqslant j} L_{ij}\otimes E_{ij}\otimes 1$, $L^{[1]3}=\sum\limits_{i\leqslant j} L_{ij}\otimes1\otimes E_{ij}$ and the relation \eqref{eq:SLL} holds in $\Uq(\mathfrak{q}_n)\otimes\mathrm{End}(V_q)\otimes\mathrm{End}(V_q)$. 
\end{definition}

The associative superalgebra $\Uq(\mathfrak{q}_n)$ is a Hopf superalgebra with the comultiplication $\Delta$, the counit $\varepsilon$  and the antipode $\mathcal{S}$ given by:
\begin{equation*}
\Delta(L)=L\otimes L,\qquad \varepsilon(L)=1,\qquad \mathcal{S}(L)=L^{-1}.
\end{equation*}

The $\mathbb{C}(q)$-vector superspace $V_q$ is naturally a $\Uq(\mathfrak{q}_n)$-supermodule via the homomorphism 
$$\Uq(\mathfrak{q}_n)\rightarrow\mathrm{End}(V_q), \quad L\mapsto S,$$
where $S$ is the matrix given in \eqref{eq:smatrix}.

If $B$ is a superalgebra which is also a $\Uq(\mathfrak{q}_n)$-supermodule, we will consider the compatibility of the $\Uq(\mathfrak{q}_n)$-action and the multiplication on $B$ in the following sense:

\begin{definition}
\label{def:smsa}
Let $B$ be a unital associative superalgebra that is also a $\Uq(\mathfrak{q}_n)$-supermodule.
\begin{enumerate}
\item $B$ is a $\mathrm{U}_q(\mathfrak{q}_n)$-supermodule superalgebra if 
$$u.(ab)=\sum\limits_{(u)}(-1)^{|u_{(2)}||a|}(u_{(1)}.a) (u_{(2)}.b),\text{ and } u.1=\varepsilon(u)1,$$
for all $a,b\in B$ and $u\in\mathrm{U}_q(\mathfrak{q}_n)$, where we use the Sweedler's notation  $\Delta(u)=\sum_{(u)}u_{(1)}\otimes u_{(2)}$.
\item $B$ is a $\mathrm{U}_q(\mathfrak{q}_n)^{\mathrm{cop}}$-supermodule superalgebra if 
$$u.(ab)=\sum\limits_{(u)}(-1)^{|u_{(1)}|(|u_{(2)}|+|a|)}(u_{(2)}.a) (u_{(1)}.b),\text{ and } u.1=\varepsilon(u)1,$$
for all $a,b\in B$ and $u\in\mathrm{U}_q(\mathfrak{q}_n)$.
\end{enumerate}
\end{definition}

Let $B_1$ and $B_2$ be two $\Uq(\mathfrak{q}_n)$-supermodule superalgebras. Suppose that there is a homomorphism of $\Uq(\mathfrak{q}_n)$-supermodules $\vartheta: B_2\otimes B_1\rightarrow B_1\otimes B_2$ such that the following diagrams commute
	\begin{equation}
		\xymatrix{
			B_2\otimes B_1\otimes B_1
			\ar[d]_{1\otimes\mathrm{mul}} \ar[r]^{\vartheta\otimes1}
			&B_1\otimes B_2\otimes B_1
			\ar[r]^{1\otimes\vartheta}
			&B_1\otimes B_1\otimes
			B_2
			\ar[d]^{\mathrm{mul}\otimes1}\\
			B_2\otimes B_1
			\ar[rr]_{\vartheta}
			&&B_1\otimes B_2
		}
		\label{eq:intwass1}
	\end{equation}
	and
	\begin{equation}
		\xymatrix{
			B_2\otimes B_2\otimes B_1
			\ar[r]^{1\otimes\vartheta}\ar[d]_{\mathrm{mul}\otimes1}
			&B_2\otimes B_1\otimes B_2
			\ar[r]^{\Theta\otimes1}
			&B_1\otimes B_2
			\otimes B_2
			\ar[d]^{1\otimes\mathrm{mul}}\\
		B_2\otimes B_1
			\ar[rr]_{\vartheta}
			&&B_1\otimes B_2
		}\label{eq:intwass2}
	\end{equation}
	where $\mathrm{mul}$ denotes the multiplication map. Then it has been shown in \cite[Lemma~3.1]{CW23} that the braided tensor product $B_1\otimes B_2$ is also a $\Uq(\mathfrak{q}_n)$-supermodule superalgebra under the multiplication
$$B_1\otimes B_2\otimes B_1\otimes B_2\xrightarrow{1\otimes\vartheta\otimes1} B_1\otimes B_1\otimes B_2\otimes B_2\xrightarrow{\mathrm{mul}\otimes\mathrm{mul}} B_1\otimes B_2.$$

\section{Quantum Coordinate Superalgebras}\label{sec:qcs}

A quantum coordinate superalgebra $\mathsf{A}_{r,n}$ of type $Q$ is a quantum analogue of the supersymmetric algebra $\mathrm{Sym}\left(V^{\oplus r}\right)$ on which $\mathrm{U}(\mathfrak{q}_n)$ acts as superalgebra endmorphism. In the classical (non-quantum) case, there is an isomorphism of $\mathrm{U}(\mathfrak{q}_n)$-supermodule superalgebras
$$\mathrm{Sym}\left(V^{\oplus (r+k)}\right)\cong
\mathrm{Sym}\left(V^{\oplus r}\right)\otimes\mathrm{Sym}\left(V^{\oplus k}\right).$$
This section is devoted to establishing such an isomorphism in quantum case where the superalgebra structure on the tensor product of two quantum coordinate superalgebras will be the braided tensor product.

The quantum coordinate superalgebra $\mathsf{A}_{r,n}$ is defined in \cite[Section~4]{CW20} as a sub-superalgebra of the finite dual $\Uq(\mathfrak{q}_n)^{\circ}$ generated by certain matrix elements with respect to the natural $\Uq(\mathfrak{q}_n)$-supermodule $V_q$. According to \cite[Proposition~3.6.4]{BDK20} and \cite[Section~2]{CW23}, the quantum coordinate superalgebra $\mathsf{A}_{r,n}$ can also be described via generators and relations as follows:

\begin{definition}
The \textit{quantum coordinate superalgebra} $\mathsf{A}_{r,n}$ is the unital associative superalgebra generated by $t_{ia}$ of parity $|i|+|a|$ with $i\in I_{r|r}, a\in I_{n|n}$, subjects to the relations
\begin{align}
t_{ia}&=t_{-i,-a},\label{eq:QCA1}\\
S^{12}T^{1[3]}T^{2[3]}&=T^{2[3]}T^{1[3]}S^{12},\label{eq:QCA2}
\end{align}
where $T=\sum\limits_{i\in I_{r|r}, a\in I_{n|n}}E_{ia}\otimes t_{ia}$.  It is a quantum analogue of the supersymmetric algebra $\mathrm{Sym}\left(V^{\oplus r}\right)$.
\end{definition}

It is shown in \cite[Lemma~3.2]{CW23} that the superalgebra $\mathsf{A}_{r,n}$ is also presented by generators $t_{ia}$ with $i=1,\ldots,r,  a\in I_{n|n}$ and the relation
\begin{equation}
R_+^{12}T_+^{1[3]}T_+^{2[3]}=T_+^{2[3]}T_+^{1[3]}S^{12},\label{eq:ArlnsM}
\end{equation}
where $T_+=\sum\limits_{i=1}^{r}\sum\limits_{a\in I_{n|n}}E_{ia}\otimes t_{ia}$, $S$ is the matrix \eqref{eq:smatrix}, and 
\begin{equation}
R_+:=\sum\limits_{i,j=1}^rq^{\delta_{ij}}E_{ii}\otimes E_{jj}+\xi \sum\limits_{1\leqslant i<j\leqslant r }E_{ji}\otimes E_{ij}\,\label{eq:rmatrix}
\end{equation}
is the submatrix\footnote{The submatrix $R$ of $S$ is exactly the R-matrix of $\Uq(\mathfrak{gl}_r)$} of $S$ involving the terms $E_{ik}\otimes E_{jl}$ with $1\leqslant i, j,k,l\leqslant r$. 

Beside being an associative superalgebra, $\mathsf{A}_{r,n}$ is also a $\Uq(\mathfrak{q}_n)$-supermodule superalgebra (see \cite[Lemma~2.3]{CW23}) in the sense of Definition~\ref{def:smsa} with the action $\Phi$ determined by
\begin{equation}
L^{[2]3}\underset{\Phi}{\cdot}T^{1[2]}=T^{1[2]}S^{13},
\label{eq:QCAUq}
\end{equation}
where $S$ is the matrix \eqref{eq:smatrix} and $$L^{[2]3}\underset{\Phi}{\cdot}T^{1[2]}:=\sum\limits_{a,b,c\in I_{n|n}}\sum\limits_{i\in I_{r|r}}(-1)^{(|a|+|b|)(|i|+|c|)}E_{ic}\otimes\Phi_{L_{ab}}(t_{ic})\otimes E_{ab}.$$
According to \cite[Remark~4.4]{CW20}, the $\Uq(\mathfrak{q}_n)$-supermodule superalgebra $\mathsf{A}_{r,n}$ is a quantum analogue of the $\mathrm{U}(\mathfrak{q}_n)$-supermodule superalgebra $\mathrm{Sym}\left(V^{\oplus r}\right)$.

\begin{lemma}
\label{lem:embed}
For each $0\leq p\leq k$, there is an injective homomorphism of $\Uq(\mathfrak{q}_n)$-supermodule superalgebras
$$\iota_p: \mathsf{A}_{r,n}\rightarrow \mathsf{A}_{r+k,n},\quad
t_{ia}\mapsto t_{p+i,a},\quad i=1,\ldots,r,\quad a\in I_{n|n}.$$
\end{lemma}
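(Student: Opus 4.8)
The plan is to check directly that the assignment $t_{ia}\mapsto t_{p+i,a}$ respects the defining relations \eqref{eq:QCA1}--\eqref{eq:QCA2} of $\mathsf{A}_{r,n}$, that it is injective, and that it intertwines the $\Uq(\mathfrak{q}_n)$-actions. Since $\mathsf{A}_{r,n}$ is a free module with a PBW-type basis in the monomials $t_{ia}$ (coming from the flat deformation statement behind \cite[Proposition~3.6.4]{BDK20}), injectivity will follow once we know the map is a well-defined homomorphism, because the image lands in the sub-superalgebra generated by $\{t_{p+i,a}: 1\le i\le r,\ a\in I_{n|n}\}$, and that sub-superalgebra is itself free on those generators with the same relations (the relations \eqref{eq:ArlnsM} only couple indices within $\{p+1,\dots,p+r\}$).

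\textbf{Step 1: well-definedness.} Using the reduced presentation \eqref{eq:ArlnsM}, write $T_+=\sum_{i=1}^{r}\sum_{a}E_{ia}\otimes t_{ia}$ for $\mathsf{A}_{r,n}$ and let $T_+^{(p)}=\sum_{i=1}^{r}\sum_{a}E_{p+i,a}\otimes t_{p+i,a}$ be the corresponding matrix inside $\mathsf{A}_{r+k,n}$. The relation \eqref{eq:ArlnsM} for $\mathsf{A}_{r+k,n}$, restricted to the rows and columns indexed by $\{p+1,\dots,p+r\}$ in the first and second tensor legs, reads $R_+^{12}(T_+^{(p)})^{1[3]}(T_+^{(p)})^{2[3]}=(T_+^{(p)})^{2[3]}(T_+^{(p)})^{1[3]}S^{12}$, where the relevant submatrix of the big $R_+$ of $\mathsf{A}_{r+k,n}$ on the block $\{p+1,\dots,p+r\}$ is exactly the $R_+$ of $\mathsf{A}_{r,n}$ (the formula \eqref{eq:rmatrix} is translation-invariant: $q^{\delta_{ij}}$ and $\xi E_{ji}\otimes E_{ij}$ only depend on whether the two indices are equal or ordered). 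Hence the generators $t_{p+i,a}$ in $\mathsf{A}_{r+k,n}$ satisfy precisely the relations imposed on the $t_{ia}$ in $\mathsf{A}_{r,n}$, and the relation \eqref{eq:QCA1} is preserved since $t_{p+i,a}=t_{-(p+i),-a}$ in $\mathsf{A}_{r+k,n}$ when we use the full index set. Therefore $\iota_p$ extends to a superalgebra homomorphism.

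\textbf{Step 2: $\Uq(\mathfrak{q}_n)$-equivariance.} The action $\Phi$ on each $\mathsf{A}_{r,n}$ is given by \eqref{eq:QCAUq}, $L^{[2]3}\cdot T^{1[2]}=T^{1[2]}S^{13}$, and the matrix $S$ appearing on the right acts only in the second and third tensor legs, i.e.\ on the $I_{n|n}$-indices $a$; it does not touch the row index $i$. Consequently, applying $\Phi$ to $t_{p+i,a}$ inside $\mathsf{A}_{r+k,n}$ gives the same $S$-matrix formula in the $a$-index as applying it to $t_{ia}$ in $\mathsf{A}_{r,n}$; the shift $i\mapsto p+i$ is a spectator. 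Writing this out with the block-restricted $T_+^{(p)}$ in place of $T_+$ shows $L^{[2]3}\cdot (T_+^{(p)})^{1[2]}=(T_+^{(p)})^{1[2]}S^{13}$, which is the equivariance statement. One must also check the Koszul sign in the definition of $L^{[2]3}\cdot T^{1[2]}$ matches, but since $|t_{p+i,a}|=|p+i|+|a|$ and $p+i$ has the same parity behavior relative to $I_{r|r}$-labelling as $i$ does (both positive integers in the reduced labelling $i=1,\dots,r$), the signs are literally identical. Hence $\iota_p$ is a homomorphism of $\Uq(\mathfrak{q}_n)$-supermodule superalgebras.

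\textbf{Step 3: injectivity.} It remains to see $\iota_p$ is injective. The sub-superalgebra $\mathsf{A}'\subseteq\mathsf{A}_{r+k,n}$ generated by $\{t_{p+i,a}:1\le i\le r,\ a\in I_{n|n}\}$ is a quotient of $\mathsf{A}_{r,n}$ via $\iota_p$ by Steps 1--2; I claim the quotient map is also split, or more directly that both algebras have the same PBW basis and $\iota_p$ sends basis to basis. Indeed, $\mathsf{A}_{r+k,n}$ has a monomial basis in the $t_{jb}$ with $j=1,\dots,r+k$, $b\in I_{n|n}$, ordered suitably; the sub-monomials involving only $j\in\{p+1,\dots,p+r\}$ form a linearly independent set, and these are precisely the images of the PBW basis of $\mathsf{A}_{r,n}$. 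So $\iota_p$ is injective. The main obstacle I anticipate is purely bookkeeping: getting the index blocks and the placement of the submatrices of $R_+$ and $S$ exactly right in the tensor-leg notation (which legs carry row indices versus column indices), and verifying the Koszul signs are unchanged under the shift; conceptually nothing subtle happens because the relations and the coaction are ``local'' in the $I_{r|r}$-index and translation-invariant.
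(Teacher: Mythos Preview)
Your proposal is correct and follows essentially the same approach as the paper: both use the reduced presentation \eqref{eq:ArlnsM} to check that the shift $i\mapsto p+i$ preserves the defining relations (the paper writes this out generator-by-generator, you phrase it as translation-invariance of the block $R_+$), both observe that the $\Uq(\mathfrak{q}_n)$-action \eqref{eq:QCAUq} acts only through the $I_{n|n}$-index, and both deduce injectivity from the PBW-type monomial basis of \cite[Proposition~3.6.4]{BDK20}.
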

\begin{proof}
Since $\mathsf{A}_{r,n}$ is generated by $t_{ia}, i=1,\ldots,r, a\in I_{n|n}$, the assignment $t_{ia}\mapsto t_{p+i,a}$ entends to a homomorphism from the free unital associative superalgebra generated by $t_{ia}, i=1,\ldots,r, a\in I_{n|n}$ to the associative superalgebra $\mathsf{A}_{r+k,n}$. 

By \eqref{eq:ArlnsM}, the elements $t_{ia}$ with $i=1,\ldots, r, a\in I_{n|n}$ in $\mathsf{A}_{r,n}$ satisfy the relation: 
 \begin{equation*}
	\begin{aligned}
		q^{\delta_{ij}}t_{ia}t_{jb}-(-1)^{|a||b|}q^{\varphi(a,b)}t_{jb}t_{ia}
		=\xi (\delta_{a<b}-\delta_{j<i})t_{ja}t_{ib}
		+(-1)^{|b|}\xi\delta_{-a<b}t_{j,-a}t_{i,-b}.
	\end{aligned}
\end{equation*}
While the elements $t_{p+i,a}$ with $i=1,\ldots, r, a\in I_{n|n}$ in $\mathsf{A}_{r,n}$ in $\mathsf{A}_{r+k, n}$ satisfy the relation:
 \begin{equation*}
	\begin{aligned}
		&q^{\delta_{p+i,p+j}}t_{p+i,a}t_{p+j,b}-(-1)^{|a||b|}q^{\varphi(a,b)}t_{p+j,b}t_{p+i,a}\\
		=&\xi (\delta_{a<b}-\delta_{p+j<p+i})t_{p+j,a}t_{p+i,b}
		+(-1)^{|b|}\xi\delta_{-a<b}t_{p+j,-a}t_{p+i,-b}.
	\end{aligned}
\end{equation*}
Hence, there is a well-defined homomorphism of associative superalgebras $\iota_p:\mathsf{A}_{r,n}\rightarrow\mathsf{A}_{r+k,n}$ such that $t_{ia}\mapsto t_{p+i, a}$.
Note that the $\Uq(\mathfrak{q}_n)$-actions on $\mathsf{A}_{r,n}$ and $\mathsf{A}_{r+k,n}$ are given by \eqref{eq:QCAUq}, $\iota_p$ is also a homomorphism of $\Uq(\mathfrak{q}_n)$-supermodules.

The injectivity of $\iota_p$ follows from \cite[Proposition~3.6.4]{BDK20}. It states that the monomials
$$\prod\limits_{i=1,\ldots,r, a\in I_{n|n}}t_{ia}^{d_{ia}}, \quad d_{ia}\geq0\text{ if }|a|=\bar{0}\text{ and }d_{ia}\in{0,1}\text{ if }|a|=\bar{1}$$
form a $\mathbb{C}(q)$-basis of $\mathsf{A}_{r,n}$, where the product is taken with respect to the lexicographic order.
\end{proof}

In order to properly define a braided multiplication on the tensor product $\mathsf{A}_{r,n}\otimes\mathsf{A}_{k,n}$ such that it is compatible with the $\Uq(\mathfrak{q}_n)$-action on $\mathsf{A}_{r,n}\otimes\mathsf{A}_{k,n}$ in the sense of Definition~\ref{def:smsa}, we need a braiding operator defined in the following proposition.

\begin{proposition}\label{prop:interwa}
	There is a unique homomorphism of $\Uq(\mathfrak{q}_n)$-supermodules
	$$\Theta:\mathsf{A}_{k,n}\otimes\mathsf{A}_{r,n}
	\rightarrow\mathsf{A}_{r,n}\otimes\mathsf{A}_{k,n}$$
	satisfying the commutative diagrams \eqref{eq:intwass1},\eqref{eq:intwass2}, and such that 
	\begin{align}
		\Theta\left(1\otimes x\right)=&x\otimes1,\quad x\in\mathsf{A}_{r,n}\label{eq:theta1a}\\
        \Theta\left(y\otimes1\right)=&1\otimes y\quad y\in\mathsf{A}_{k,n},\label{eq:theta1b}\\		
		\Theta\left(T_k^{1[3]}T_r^{2[4]}\right)
		=&T_r^{2[3]}T_k^{1[4]}S^{12},\label{eq:theta2}
	\end{align}
where $T_r=\sum\limits_{i=1,\ldots,r \atop a\in I_{n|n}}E_{ia}\otimes t_{ia}$ and $T_k=\sum\limits_{j=1,\ldots,k\atop b\in I_{n|n}}E_{jb}\otimes t_{jb}$ are the generator matrices of $\mathsf{A}_{r,n}$ and $\mathsf{A}_{k,n}$ respectively.
\end{proposition}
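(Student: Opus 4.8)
The plan is to realize $\Theta$ inside the larger algebra $\mathsf{A}_{r+k,n}$, mirroring the classical argument. Write $\iota:=\iota_0\colon\mathsf{A}_{r,n}\to\mathsf{A}_{r+k,n}$, $t_{ia}\mapsto t_{ia}$ ($1\le i\le r$), and $\jmath:=\iota_r\colon\mathsf{A}_{k,n}\to\mathsf{A}_{r+k,n}$, $t_{jb}\mapsto t_{r+j,b}$ ($1\le j\le k$), for the injective homomorphisms of $\Uq(\mathfrak{q}_n)$-supermodule superalgebras provided by Lemma~\ref{lem:embed}; their images together generate $\mathsf{A}_{r+k,n}$. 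Since in the lexicographic order on the index pairs the generators of $\mathsf{A}_{r+k,n}$ with first index $\le r$ precede those with first index $>r$, the PBW basis of \cite[Proposition~3.6.4]{BDK20} shows that the multiplication map
$$\Omega\colon\mathsf{A}_{r,n}\otimes\mathsf{A}_{k,n}\longrightarrow\mathsf{A}_{r+k,n},\qquad x\otimes y\mapsto\iota(x)\,\jmath(y),$$
carries the product of the PBW bases of $\mathsf{A}_{r,n}$ and $\mathsf{A}_{k,n}$ bijectively onto the PBW basis of $\mathsf{A}_{r+k,n}$; hence $\Omega$ is a $\mathbb{C}(q)$-linear isomorphism, and it is a homomorphism of $\Uq(\mathfrak{q}_n)$-supermodules because $\iota,\jmath$ are $\Uq(\mathfrak{q}_n)$-equivariant and the multiplication of the supermodule superalgebra $\mathsf{A}_{r+k,n}$ is $\Uq(\mathfrak{q}_n)$-equivariant. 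I then define
$$\Theta:=\Omega^{-1}\circ\mathrm{mul}\circ(\jmath\otimes\iota)\colon\ \mathsf{A}_{k,n}\otimes\mathsf{A}_{r,n}\longrightarrow\mathsf{A}_{r,n}\otimes\mathsf{A}_{k,n},\qquad y\otimes x\longmapsto\Omega^{-1}\!\big(\jmath(y)\,\iota(x)\big),$$
where $\mathrm{mul}$ is the multiplication of $\mathsf{A}_{r+k,n}$.

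The elementary properties are then immediate. The map $\Theta$ is a homomorphism of $\Uq(\mathfrak{q}_n)$-supermodules since $\jmath\otimes\iota$, $\mathrm{mul}$, and $\Omega^{-1}$ all are. Relations \eqref{eq:theta1a} and \eqref{eq:theta1b} follow from $\Omega(x\otimes 1)=\iota(x)$ and $\Omega(1\otimes y)=\jmath(y)$. For the two hexagon diagrams I apply the injective map $\Omega$ to both legs: since $\iota,\jmath$ are algebra homomorphisms and the multiplication of $\mathsf{A}_{r+k,n}$ is associative, both legs of \eqref{eq:intwass1} send $y\otimes x_1\otimes x_2$ to $\jmath(y)\,\iota(x_1)\,\iota(x_2)=\jmath(y)\,\iota(x_1x_2)$, and both legs of \eqref{eq:intwass2} send $y_1\otimes y_2\otimes x$ to $\jmath(y_1)\,\jmath(y_2)\,\iota(x)=\jmath(y_1y_2)\,\iota(x)$; injectivity of $\Omega$ then yields the commutativity.

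The substantive step is the matrix identity \eqref{eq:theta2}. By construction $\Omega\circ\Theta$ applied to $T_k^{1[3]}T_r^{2[4]}$ equals $\jmath(T_k)\,\iota(T_r)$, the product in $\mathsf{A}_{r+k,n}$ of the lower block $\sum_{1\le j\le k,\,b}E_{r+j,b}\otimes t_{r+j,b}$ and the upper block $\sum_{1\le i\le r,\,a}E_{ia}\otimes t_{ia}$ of the generator matrix $T_+$ of $\mathsf{A}_{r+k,n}$. I would now restrict the defining relation \eqref{eq:ArlnsM} of $\mathsf{A}_{r+k,n}$ to the component in which the first auxiliary row space is supported on $\{r+1,\ldots,r+k\}$ and the second on $\{1,\ldots,r\}$. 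On this component the $R$-matrix $R_+$ of \eqref{eq:rmatrix} acts as the identity: the factor $q^{\delta_{ij}}$ equals $1$ because the two index sets are disjoint, and the $\xi$-terms $E_{ji}\otimes E_{ij}$ (which require $i<j$) do not occur, since here the first tensor slot carries the larger indices. Hence \eqref{eq:ArlnsM} reduces on this component to $\jmath(T_k)\,\iota(T_r)=\iota(T_r)\,\jmath(T_k)\,S$ in the appropriate tensor slots, and applying $\Omega^{-1}$ gives \eqref{eq:theta2}. I expect the only genuinely delicate point of the whole argument to be keeping the many tensor-leg labels straight so as to extract exactly this block of \eqref{eq:ArlnsM}; everything else is formal.

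For uniqueness, suppose $\Theta'$ also satisfies all the stated conditions. By \eqref{eq:theta1a} and \eqref{eq:theta1b} it agrees with $\Theta$ on $\mathsf{A}_{k,n}\otimes 1$ and $1\otimes\mathsf{A}_{r,n}$, and by \eqref{eq:theta2} it agrees on all $t_{jb}\otimes t_{ia}$. A double induction on the bidegree $(\deg_{\mathsf{A}_{r,n}},\deg_{\mathsf{A}_{k,n}})$ then determines $\Theta'$ on every product of generators: \eqref{eq:intwass1} lowers the $\mathsf{A}_{r,n}$-degree and \eqref{eq:intwass2} lowers the $\mathsf{A}_{k,n}$-degree, reducing to the base cases already pinned down. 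Since such monomial tensors span $\mathsf{A}_{k,n}\otimes\mathsf{A}_{r,n}$, we conclude $\Theta'=\Theta$.
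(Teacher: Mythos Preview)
Your overall strategy---realize $\Theta$ inside $\mathsf{A}_{r+k,n}$ via the PBW isomorphism and extract the braiding from the multiplication---is elegant and essentially what the paper does \emph{after} Proposition~\ref{prop:interwa} (Lemma~\ref{lem:braidmul} and Theorem~\ref{thm:isoA}). However, there is a genuine error in the verification of \eqref{eq:theta2}, and it stems from your choice of embeddings.

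You place $\mathsf{A}_{r,n}$ at the \emph{low} indices $\{1,\dots,r\}$ and $\mathsf{A}_{k,n}$ at the \emph{high} indices $\{r+1,\dots,r+k\}$. You then claim that on the block with slot~$1$ high and slot~$2$ low, $R_+$ acts as the identity. It is true that the off-diagonal part $\xi\sum_{p<q}E_{qp}\otimes E_{pq}$ annihilates vectors \emph{in} this block; but it also \emph{maps into} this block from the complementary block (slot~$1$ low, slot~$2$ high). Concretely, for $i>r\ge j$ the $(i,j)$ row of $R_+$ has a nonzero off-diagonal entry $[R_+]_{(i,j),(j,i)}=\xi$, so
\[
\bigl[R_+^{12}T_+^{1[3]}T_+^{2[3]}\bigr]_{(i,j),(a,b)}=t_{ia}t_{jb}+\xi\,t_{ja}t_{ib},
\]
not just $t_{ia}t_{jb}$. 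Equivalently, the component form of \eqref{eq:ArlnsM} in $\mathsf{A}_{r+k,n}$ gives, for $i>r\ge j$,
\[
t_{ia}t_{jb}=(-1)^{|a||b|}q^{\varphi(a,b)}t_{jb}t_{ia}+\xi(\delta_{a<b}-1)t_{ja}t_{ib}+(-1)^{|b|}\xi\delta_{-a<b}t_{j,-a}t_{i,-b},
\]
so your $\Theta$ produces an extra term $-\xi\,t_{ib}\otimes t_{ja}$ and does \emph{not} satisfy \eqref{eq:theta2}.

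The fix is simple: swap the embeddings, sending $\mathsf{A}_{k,n}$ to the low indices $\{1,\dots,k\}$ and $\mathsf{A}_{r,n}$ to the high indices $\{k+1,\dots,k+r\}$ (this is exactly the choice the paper makes in Lemma~\ref{lem:braidmul} and Theorem~\ref{thm:isoA}). Then the relevant block is slot~$1$ low, slot~$2$ high, for which $\delta_{j<i}=0$ and the off-diagonal of $R_+$ genuinely contributes nothing to that row; the restricted relation becomes exactly $\iota_0(T_k)^{1}\iota_k(T_r)^{2}=\iota_k(T_r)^{2}\iota_0(T_k)^{1}S^{12}$, yielding \eqref{eq:theta2}. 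The only cost is that the PBW argument for the bijectivity of $\Omega$ is less immediate (the ordered monomials are now low--then--high, i.e.\ $\iota_0(\cdot)\iota_k(\cdot)$), but a graded-dimension count or a triangularity argument handles this easily.
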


\begin{proof}
Let $W_r$ (resp. $W_k$)be the $\mathbb{C}(q)$-sub-superspace of $\mathsf{A}_{r,n}$ (resp. $\mathsf{A}_{k,n}$) spanned by $1$ and $t_{ia}$ for $i=1,\ldots,r$ and $a\in I_{n|n}$ (resp. $t_{ia}$ for $i=1,\ldots,k$ and $a\in I_{n|n}$). Then \eqref{eq:theta1a}, \eqref{eq:theta1b} and \eqref{eq:theta2} define a $\mathbb{C}(q)$-linear map $W_k\otimes W_r\rightarrow W_r\otimes W_k$, which extends to a $\mathbb{C}(q)$-linear map $\widetilde{\Theta}:\mathsf{F}_{k,n}\otimes\mathsf{F}_{r,n}
	\rightarrow\mathsf{A}_{r,n}\otimes\mathsf{A}_{k,n}$ satisfying the commutative diagrams \eqref{eq:intwass1} and \eqref{eq:intwass2}, where $\mathsf{F}_{r,n}$ and $\mathsf{F}_{k,n}$ are the free associative superalgebras generated by $t_{ia}$, $i=1,\ldots, r$, $a\in I_{n|n}$ and $t_{jb}$, $j=1,\ldots, k$, $b\in I_{n|n}$, respectively.

We need to show that $\widetilde{\Theta}$ preserves the defining relations \eqref{eq:ArlnsM} for $\mathsf{A}_{r,n}$ and $\mathsf{A}_{k,n}$. Recall from \eqref{eq:ArlnsM} that the superalgebra $\mathsf{A}_{r,n}$ is presented by the generators $t_{ia}$, $i=1,\ldots,r$ and $a\in I_{n|n}$ and the relation
$$R_+^{12}T_r^{1[3]}T_r^{2[3]}=T_r^{2[3]}T_r^{1[3]}S^{12}.$$
Using the commutative diagram \eqref{eq:intwass1}, we verify that
\begin{align*}
\widetilde{\Theta}\left(T_k^{1[4]}T_r^{2[5]}T_r^{3[5]}\right)
=&
\widetilde{\Theta}\circ(1\otimes\mathrm{mul})\left(T_k^{1[4]}T_r^{2[5]}T_r^{3[6]}\right)\\
=&(\mathrm{mul}\otimes1)\circ(1\otimes\widetilde{\Theta})\circ(\widetilde{\Theta}\otimes1)
\left(T_k^{1[4]}T_r^{2[5]}T_r^{3[6]}\right)\\
=&(\mathrm{mul}\otimes1)\circ(1\otimes\widetilde{\Theta})
\left(T_r^{2[4]}T_k^{1[5]}S^{12}T_r^{3[6]}\right)\\
=&(\mathrm{mul}\otimes1)
\left(T_r^{2[4]}\widetilde{\Theta}\left(T_k^{1[5]}T_r^{3[6]}\right)S^{12}\right)\\
=&(\mathrm{mul}\otimes1)
\left(T_r^{2[4]}T_r^{3[5]}T_k^{1[6]}S^{13}S^{12}\right)\\
=&T_r^{2[4]}T_r^{3[4]}T_k^{1[5]}S^{13}S^{12},\\
\widetilde{\Theta}\left(T_k^{1[4]}T_r^{3[5]}T_r^{2[5]}\right)
=&\widetilde{\Theta}\circ(1\otimes\mathrm{mul})\left(T_k^{1[4]}T_r^{3[5]}T_r^{2[6]}\right)\\
=&(\mathrm{mul}\otimes1)\circ(1\otimes \widetilde{\Theta})\circ(\widetilde{\Theta}\otimes1)
\left(T_k^{1[4]}T_r^{3[5]}T_r^{2[6]}\right)\\
=&(\mathrm{mul}\otimes1)\circ(1\otimes \widetilde{\Theta})
\left(T_r^{3[4]}T_k^{1[5]}S^{13}T_r^{2[6]}\right)\\
=&(\mathrm{mul}\otimes1)\left(T_r^{3[4]}T_r^{2[5]}T_k^{1[6]}S^{12}S^{13}\right)\\
=&T_r^{3[4]}T_r^{2[4]}T_k^{1[5]}S^{12}S^{13},
\end{align*}
which yield that
\begin{align*}
\widetilde{\Theta}\left(T_k^{1[4]}R_+^{23}T_r^{2[5]}T_r^{3[5]}\right)
=&R_+^{23}\widetilde{\Theta}\left(T_k^{1[4]}T_r^{2[5]}T_r^{3[5]}\right)
=R_+^{23}T_r^{2[4]}T_r^{3[4]}T_k^{1[5]}S^{13}S^{12}\\
=&T_r^{3[4]}T_r^{2[4]}T_k^{1[5]}S^{23}S^{13}S^{12},\\
\widetilde{\Theta}\left(T_k^{1[4]}T_r^{3[5]}T_r^{2[5]}S^{23}\right)=&T_r^{3[4]}T_r^{2[4]}T_k^{1[5]}S^{12}S^{13}S^{23}.
\end{align*}
Note that the matrix $S$ satisfies the quantum Yang-Baxter equation
$$S^{12}S^{13}S^{23}=S^{23}S^{13}S^{12}.$$
Thus, we obtain that
$$\widetilde{\Theta}\left(T_k^{1[4]}R_+^{23}T_r^{2[5]}T_r^{3[5]}\right)
=\widetilde{\Theta}\left(T_k^{1[4]}T_r^{3[5]}T_r^{2[5]}S^{23}\right).$$

Similarly, we use the commutative diagram \eqref{eq:intwass2} to deduce that 
\begin{align*}
\widetilde{\Theta}\left(R_+^{12}T_k^{1[4]}T_k^{2[4]}T_r^{3[5]}\right)
=\widetilde{\Theta}\left(T_k^{2[4]}T_k^{1[4]}S^{12}T_r^{3[5]}\right).
\end{align*}
Hence, $\widetilde{\Theta}$ induces a well-defined $\mathbb{C}(q)$-linear map $\Theta:\mathsf{A}_{k,n}\otimes\mathsf{A}_{r,n}\rightarrow\mathsf{A}_{r,n}\otimes\mathsf{A}_{k,n}$ which satisfies \eqref{eq:theta1a}, \eqref{eq:theta1b}, \eqref{eq:theta2} and the commutative diagrams \eqref{eq:intwass1}, \eqref{eq:intwass2}. Such a $\mathbb{C}(q)$-linear map is unique since its image $\Theta(y\otimes x)$ is determined by \eqref{eq:theta1a}, \eqref{eq:theta1b} and \eqref{eq:theta2} when $x$ is one of the generators $t_{ia}$, $i=1,\ldots, r$, $a\in I_{n|n}$ of $\mathsf{A}_{r,n}$ and $y$ is one of the generators $t_{jb}$, $j=1,\ldots, k$, $a\in I_{n|n}$. 

Finally, we show that $\Theta$ is also a homomorphism of $\Uq(\mathfrak{q}_n)$-supermodules. Since both $\mathsf{A}_{r,n}$ and $\mathsf{A}_{k,n}$ are $\Uq(\mathfrak{q}_n)$-supermodule superalgebras, it suffices to verify that $\Theta$ commutes with the action of $\Uq(\mathfrak{q}_n)$ on generators. Since $\Uq(\mathfrak{q}_n)$ acts on $\mathsf{A}_{k,n}\otimes\mathsf{A}_{r,n}$ and $\mathsf{A}_{r,n}\otimes\mathsf{A}_{k,n}$ via the comultiplication
$$\Delta\left(L^{[1]2}\right)=L^{[1]2}L^{[1']2},$$ 
we verify that
\begin{align*}
\Theta\left(L^{[3]4}\underset{\Phi}{\cdot}
\left(T_k^{1[3]}T_r^{2[3']}\right)\right)
=&\Theta\left(\left(L^{[3]4}\underset{\Phi}{\cdot}T_k^{1[3]}\right)\left(L^{[3']4}\underset{\Phi}{\cdot}T_r^{2[3']}\right)\right)
=\Theta\left(T_k^{1[3]}S^{14}T_r^{2[3']}S^{24}\right)\\
=&T_r^{2[3]}T_k^{1[3']}S^{12}S^{14}S^{24}.\\
L^{[3]4}\underset{\Phi}{\cdot}\Theta\left(T_k^{1[3]}T_r^{2[3']}\right)
=&L^{[3]4}\underset{\Phi}{\cdot}\left(T_r^{2[3]}T_k^{1[3']}\right)S^{12}
=\left(L^{[3]4}\underset{\Phi}{\cdot}T_r^{2[3]}\right)\left(L^{[3]4}\underset{\Phi}{\cdot}T_k^{1[3']}\right)S^{12}\\
=&T_r^{2[3]}S^{24}T_k^{1[3']}S^{14}S^{12}
=T_r^{2[3]}T_k^{1[3']}S^{24}S^{14}S^{12}.
\end{align*}
Thus,
$$\Theta\left(L^{[3]4}\underset{\Phi}{\cdot}\left(T_k^{1[3]}T_r^{2[3']}\right)\right)
=L^{[3]4}\underset{\Phi}{\cdot}\Theta\left(T_k^{1[3]}T_r^{2[3']}\right),$$
which proves that $\Theta$ is a homomorphism of $\Uq(\mathfrak{q}_n)$-supermodules.
\end{proof}

Now, we obtain a $\Uq(\mathfrak{q}_n)$-supermodule homomorphism $\Theta:\mathsf{A}_{k,n}\otimes\mathsf{A}_{r,n}\rightarrow\mathsf{A}_{r,n}\otimes\mathsf{A}_{k,n}$ satisfying the commutative diagrams \eqref{eq:intwass1} and \eqref{eq:intwass2}. Hence, $\mathsf{A}_{r,n}\otimes\mathsf{A}_{k,n}$ is equipped with a braided multiplication:
$$\mathsf{A}_{r,n}\otimes\mathsf{A}_{k,n}
\otimes\mathsf{A}_{r,n}\otimes\mathsf{A}_{k,n}
\xrightarrow{1\otimes\Theta\otimes1}\mathsf{A}_{r,n}\otimes\mathsf{A}_{r,n}\otimes\mathsf{A}_{k,n}\otimes\mathsf{A}_{k,n}\xrightarrow{\mathrm{mul}\otimes\mathrm{mul}}
\mathsf{A}_{r,n}\otimes\mathsf{A}_{k,n}.
$$
By \cite[Lemma~3.1]{CW23}, $\mathsf{A}_{r,n}\otimes\mathsf{A}_{k,n}$ is a $\Uq(\mathfrak{q}_n)$-supermodule superalgebra. In terms of generators, we have
\begin{equation*}
	\left(1\otimes t_{jb}\right)\left(t_{ia}\otimes1\right)=(-1)^{|a||b|}q^{\varphi(b,a)}t_{ia}\otimes t_{jb}
+\delta_{b<a}\xi t_{ib}\otimes t_{ja}+\delta_{-b<a}(-1)^{|a|}\xi t_{i,-b}\otimes t_{j,-a},
\end{equation*}
for $i=1,\ldots,r$, $j=1,\ldots,k$ and $a,b\in I_{n|n}$.

\begin{lemma}
	\label{lem:braidmul}
Let $x\in\mathsf{A}_{r,n}$ and $y\in\mathsf{A}_{k,n}$. Suppose that
$$\Theta(y\otimes x)=\sum\limits_i x_i\otimes y_i,$$
where $x_i\in\mathsf{A}_{r,n}$, $y_i\in\mathsf{A}_{k,n}$ and the summation is taken over a finite set. Then the following equality holds in $\mathsf{A}_{r+k,n}$
\begin{equation*}
\iota_0(y)\iota_k(x)=\sum\limits_i\iota_k(x_i)\iota_0(y_i),
\end{equation*}
where $\iota_0:\mathsf{A}_{k,n}\rightarrow\mathsf{A}_{r+k,n}$ and $\iota_k:\mathsf{A}_{r,n}\rightarrow\mathsf{A}_{r+k,n}$ are the maps given in Lemma~\ref{lem:embed}.
\end{lemma}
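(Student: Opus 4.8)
The plan is to compare two $\mathbb{C}(q)$-linear maps $\mathsf{A}_{k,n}\otimes\mathsf{A}_{r,n}\to\mathsf{A}_{r+k,n}$, namely $F\colon y\otimes x\mapsto\iota_0(y)\iota_k(x)$ and $G:=\mathrm{mul}\circ(\iota_k\otimes\iota_0)\circ\Theta$, so that $G(y\otimes x)=\sum_i\iota_k(x_i)\iota_0(y_i)$ whenever $\Theta(y\otimes x)=\sum_i x_i\otimes y_i$; the lemma is exactly the identity $F=G$. Since the defining relations \eqref{eq:ArlnsM} are homogeneous of degree two, both $\mathsf{A}_{r,n}$ and $\mathsf{A}_{k,n}$ carry the total-degree grading in the generators $t_{ia}$, the maps $\iota_0,\iota_k$ preserve degree, and $\Theta$ is bihomogeneous (it sends $\mathsf{A}_{k,n}$-degree $m$ and $\mathsf{A}_{r,n}$-degree $\ell$ to $\mathsf{A}_{r,n}$-degree $\ell$ and $\mathsf{A}_{k,n}$-degree $m$, as one sees by propagating \eqref{eq:theta2} through \eqref{eq:intwass1} and \eqref{eq:intwass2}). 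Hence by bilinearity it is enough to prove $F(y\otimes x)=G(y\otimes x)$ for homogeneous $x,y$, and I would argue by induction on $N:=\deg x+\deg y$.

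I would first settle the base cases. If $\deg x=0$ or $\deg y=0$ the identity is immediate from \eqref{eq:theta1a}, \eqref{eq:theta1b} and $\iota_0(1)=\iota_k(1)=1$. If $\deg x=\deg y=1$, write $x=t_{ia}$ with $1\le i\le r$ and $y=t_{jb}$ with $1\le j\le k$; then \eqref{eq:theta2}, equivalently the formula for $\Theta$ on generators displayed just after Proposition~\ref{prop:interwa}, gives
$$\Theta(t_{jb}\otimes t_{ia})=(-1)^{|a||b|}q^{\varphi(b,a)}\,t_{ia}\otimes t_{jb}+\delta_{b<a}\,\xi\,t_{ib}\otimes t_{ja}+\delta_{-b<a}\,(-1)^{|a|}\xi\,t_{i,-b}\otimes t_{j,-a},$$
so $G(t_{jb}\otimes t_{ia})$ is obtained by applying $\iota_k$ to each left tensor factor, $\iota_0$ to each right one, and multiplying, i.e.\ by replacing each monomial $t_{i'c}\otimes t_{j'd}$ above by $t_{k+i',c}\,t_{j'd}$. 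On the other side $F(t_{jb}\otimes t_{ia})=t_{jb}\,t_{k+i,a}$, and the relation \eqref{eq:ArlnsM} read off in $\mathsf{A}_{r+k,n}$ between the generators $t_{jb}$ and $t_{k+i,a}$ — in which the scalar $q^{\delta_{j,k+i}}$ and the term $\delta_{k+i<j}$ drop out because $j\le k<k+i$ — says precisely that $t_{jb}\,t_{k+i,a}$ equals the very same expression; thus $F=G$ on all degree-$(1,1)$ elements.

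For the inductive step, fix $N\ge3$, assume $F=G$ on all pairs of total degree $<N$, and note that then at least one of $\deg x,\deg y$ is $\ge2$ (otherwise we are in a base case). Suppose $\deg x\ge2$; by bilinearity take $x=t\,x'$ with $t$ a generator of $\mathsf{A}_{r,n}$ and $1\le\deg x'<\deg x$. Unwinding the commutative diagram \eqref{eq:intwass1} gives $\Theta(y\otimes tx')=\sum_{i,l}x_i x'_{il}\otimes y'_{il}$, where $\Theta(y\otimes t)=\sum_i x_i\otimes y_i$ and $\Theta(y_i\otimes x')=\sum_l x'_{il}\otimes y'_{il}$; applying $\iota_k\otimes\iota_0$, multiplying, and using that $\iota_k$ is an algebra homomorphism yields $G(y\otimes tx')=\sum_i\iota_k(x_i)\,G(y_i\otimes x')$. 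Since $\Theta$ is bihomogeneous we have $\deg y_i=\deg y$, so $(y_i,x')$ has total degree $<N$ and the inductive hypothesis turns $G(y_i\otimes x')$ into $\iota_0(y_i)\iota_k(x')$; hence $G(y\otimes tx')=\bigl(\sum_i\iota_k(x_i)\iota_0(y_i)\bigr)\iota_k(x')=G(y\otimes t)\,\iota_k(x')$, and a second application of the inductive hypothesis (to $(y,t)$, of total degree $\deg y+1<N$) rewrites $G(y\otimes t)$ as $\iota_0(y)\iota_k(t)$, so $G(y\otimes tx')=\iota_0(y)\iota_k(tx')=F(y\otimes tx')$. If instead $\deg y\ge2$, the argument is symmetric: factor $y=t'\,y'$ with $t'$ a generator of $\mathsf{A}_{k,n}$ and $1\le\deg y'<\deg y$, use \eqref{eq:intwass2} in place of \eqref{eq:intwass1}, and apply the inductive hypothesis twice in the same way. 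This closes the induction and proves $F=G$.

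The only steps needing real care are the preliminary verification that $\Theta$ is bihomogeneous — itself a short induction propagating \eqref{eq:theta2} through \eqref{eq:intwass1} and \eqref{eq:intwass2} — and checking that the unwindings of the two hexagon diagrams introduce no stray Koszul signs; they do not, because in the occurrences of $\Theta\otimes\mathrm{id}$ and $\mathrm{id}\otimes\Theta$ the identity factor (as well as $\Theta$ and $\mathrm{mul}$ themselves) is even. Everything else is a routine match of explicit quadratic expressions, so I do not anticipate a genuine obstacle beyond this bookkeeping.
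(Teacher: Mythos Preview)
Your proof is correct and follows essentially the same approach as the paper's: a degree induction whose base case $\deg x=\deg y=1$ is verified by matching \eqref{eq:theta2} against the relation \eqref{eq:ArlnsM} in $\mathsf{A}_{r+k,n}$, and whose inductive step unwinds the hexagons \eqref{eq:intwass1}, \eqref{eq:intwass2} together with the fact that $\iota_0,\iota_k$ are algebra maps. The only cosmetic differences are that the paper inducts on the bi-degree $(\deg x,\deg y)$ rather than on the total degree $N$, and factors $x$ into two pieces of degree $\le m$ rather than peeling off a single generator.
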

\begin{proof}
The superalgebra $\mathsf{A}_{r,n}$ has a $\mathbb{Z}$-grading by setting $\deg(t_{ia})=1$, $i=1,\ldots, r$ and $a\in I_{n|n}$ since the defining relation \eqref{eq:ArlnsM} is homogeneous with respect to this grading. Similarly, there is a $\mathbb{Z}$-grading on $\mathsf{A}_{k,n}$ with $\deg(t_{jb})=1$, $j=1,\ldots, k$ and $b\in I_{n|n}$. The lemma will be proved by induction on $(\deg x, \deg y)$.

It follows from the definition of $\Theta$ in Proposition~\ref{prop:interwa} that $\deg(x_i)\leq \deg(x)$ and $\deg(y_i)\leq \deg(y)$.

The statement is true when $\deg(x)=0$ or $\deg(y)=0$ since
$$\Theta(y\otimes1)=1\otimes y,\text{ and }\Theta(1\otimes x)=x\otimes1.$$

If $\deg(x)=\deg(y)=1$, we set $x=t_{ia}$ and $y=t_{jb}$, $i=1,\ldots, r$, $j=1,\ldots,k$, and $a,b\in I_{n|n}$. We deduce from \eqref{eq:theta2} that
\begin{align*}
		\Theta(t_{jb}\otimes t_{ia})=&(-1)^{|a||b|}q^{\varphi(b,a)}t_{ia}\otimes t_{jb}
		+\delta_{b<a}\xi t_{ib}\otimes t_{ja}+\delta_{-b<a}(-1)^{|a|}\xi t_{i,-b}\otimes t_{j,-a}.
\end{align*}
In $\mathsf{A}_{r+k,n}$, the defining relation \eqref{eq:ArlnsM} yields that
\begin{align*}
	\iota_0(t_{j,b})\iota_k(t_{i,a})=&t_{j,b}t_{k+i,a}\\
	=&(-1)^{|a||b|}q^{\varphi(b,a)}t_{k+i,a}t_{j,b}
	+\xi(\delta_{b<a}-\delta_{k+i<j})t_{k+i,b}t_{j,a}\\
	&+(-1)^{|a|}\xi(\delta_{-b<a}-\delta_{k+i<-j})t_{k+i,-b}t_{j,-a}\\
	=&(-1)^{|a||b|}q^{\varphi(b,a)}\iota_k(t_{i,a})\iota_0(t_{j,b})
	+\xi\delta_{b<a}\iota_k(t_{i,b})\iota_0(t_{j,a})\\
	&+(-1)^{|a|}\xi\delta_{-b<a}\iota_k(t_{i,-b})\iota_0(t_{j,-a}).
\end{align*}
This shows that the statement is also true when $\deg(x)=\deg(y)=1$.

Now, we assume that the lemma is true for all $x,y$ with $\deg(x)\leq m$ and $\deg(y)\leq m^{\prime}$. Note that an element in $\mathsf{A}_{r,n}$ of degree $m+1$ can be written as a linear combination of $xx^{\prime}$  with $\deg(x)\leq m, \deg(x^{\prime})\leq m$, we consider such an element $xx^{\prime}$ in $\mathsf{A}_{r,n}$ and $y\in\mathsf{A}_{k,n}$ with $\deg(y)\leq m^{\prime}$.

Suppose that
\begin{align*}
\Theta(y\otimes x)=\sum_i x_i\otimes y_i,\text{ and }\Theta(y_i\otimes x^{\prime})=\sum_j x_{ij}^{\prime}\otimes y_{ij}, \text{ for each }j,
\end{align*}
where $x_i, x_{ij}^{\prime}\in\mathsf{A}_{r,n}$ and $y_i, y_{ij}\in\mathsf{A}_{k,n}$. Since $\deg(x)\leq m$ and $\deg y\leq m^{\prime}$, the hypothesis implies that
\begin{align}
\iota_0(y)\iota_k(x)=\sum_i\iota_k(x_i)\iota_0(y_i),\text{ and }
\iota_0(y_i)\iota_k(x^{\prime})=\sum_j\iota_k(x_{ij}^{\prime})\iota_0(y_{ij})\text{ for each }j.
\label{eq:indhy}
\end{align}

By the commutative diagram \eqref{eq:intwass1} for $\Theta$, we have
\begin{align*}
\Theta(y\otimes xx^{\prime})
=&\Theta\circ(1\otimes\mathrm{mul})(y\otimes x\otimes x^{\prime})
=(\mathrm{mul}\otimes1)\circ(1\otimes\Theta)\circ(\Theta\otimes1)(y\otimes x\otimes x^{\prime})\\
=&\sum_i(\mathrm{mul}\otimes1)\circ(1\otimes\Theta)(x_i\otimes y_i\otimes x^{\prime})
=\sum_{i,j}x_ix_{ij}^{\prime}\otimes y_{ij}.
\end{align*}
Note that both $\iota_0$ and $\iota_k$ are homomorphisms of associative superalgebras, we verify by using \eqref{eq:indhy} that
\begin{align*}
\sum_{i, j}\iota_k(x_ix_{ij}^{\prime})\iota_0(y_{ij})
=&\sum_{i,j}\iota_k(x_i)\iota_k(x_{ij}^{\prime})\iota_0(y_{ij})
=\sum_i\iota_k(x_i)\iota_0(y_i)\iota_k(x^{\prime})\\
=&\iota_0(y)\iota_k(x)\iota_k(x^{\prime})
=\iota_0(y)\iota_k(xx^{\prime}).
\end{align*}
This shows that the lemma is also true for $xx^{\prime}$ and $y$.

Similarly, we show that the lemma is true for $x$ and $yy^{\prime}$ for $x\in\mathsf{A}_{r,n}$ and $y,y^{\prime}\in\mathsf{A}_{k,n}$ provided that $\deg(x)\leq m$ and $\deg(y), \deg(y^{\prime})\leq m^{\prime}$. Then the lemma is true for all $x\in\mathsf{A}_{r,n}$ and $y\in\mathsf{A}_{k,n}$ by induction.
\end{proof}

\begin{theorem}
\label{thm:isoA}
The $\mathbb{C}(q)$-linear map
\begin{equation*}
\sigma:\mathsf{A}_{r,n}\otimes\mathsf{A}_{k,n}
\rightarrow\mathsf{A}_{r+k,n}, \quad x\otimes y\mapsto \iota_k(x)\iota_0(y),\quad x\in\mathsf{A}_{r,n},\quad y\in\mathsf{A}_{k,n}
\end{equation*}
is an isomorphism of\, $\Uq(\mathfrak{q}_n)$-supermodule superalgebras, where \,$\iota_k:\mathsf{A}_{r,n}\rightarrow\mathsf{A}_{r+k,n}$ and $\iota_0:\mathsf{A}_{k,n}\rightarrow\mathsf{A}_{r+k,n}$ are the maps given in Lemma~\ref{lem:embed}.
\end{theorem}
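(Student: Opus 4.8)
The plan is to show that $\sigma$ is a bijective homomorphism of superalgebras and a homomorphism of $\Uq(\mathfrak{q}_n)$-supermodules. The module-homomorphism part is immediate: both $\iota_k$ and $\iota_0$ are $\Uq(\mathfrak{q}_n)$-supermodule homomorphisms by Lemma~\ref{lem:embed}, and the $\Uq(\mathfrak{q}_n)$-action on the braided tensor product $\mathsf{A}_{r,n}\otimes\mathsf{A}_{k,n}$ is the tensor-product action coming from the comultiplication (this is exactly why $\Theta$ was required to be a module map in Proposition~\ref{prop:interwa}), so $\sigma$ intertwines the actions on generators and hence everywhere.

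The main work is that $\sigma$ is an algebra homomorphism. First I would record that $\sigma$ is surjective: its image is a subalgebra of $\mathsf{A}_{r+k,n}$ containing all $\iota_k(t_{ia})=t_{k+i,a}$ for $i=1,\dots,r$ and $\iota_0(t_{jb})=t_{jb}$ for $j=1,\dots,k$, i.e.\ all the algebra generators $t_{\ell,c}$, $\ell=1,\dots,r+k$, $c\in I_{n|n}$, of $\mathsf{A}_{r+k,n}$. To see $\sigma$ is a homomorphism, take $x\otimes y$ and $x'\otimes y'$ in $\mathsf{A}_{r,n}\otimes\mathsf{A}_{k,n}$; by the formula for the braided multiplication, if $\Theta(y\otimes x')=\sum_i x_i'\otimes y_i'$ then the product in the braided tensor product is $(x\otimes y)(x'\otimes y')=\sum_i xx_i'\otimes y_i'y'$, so
$$
\sigma\big((x\otimes y)(x'\otimes y')\big)=\sum_i\iota_k(xx_i')\,\iota_0(y_i'y')
=\iota_k(x)\Big(\sum_i\iota_k(x_i')\iota_0(y_i')\Big)\iota_0(y').
$$
By Lemma~\ref{lem:braidmul}, $\sum_i\iota_k(x_i')\iota_0(y_i')=\iota_0(y)\iota_k(x')$, and substituting gives $\iota_k(x)\iota_0(y)\iota_k(x')\iota_0(y')=\sigma(x\otimes y)\sigma(x'\otimes y')$. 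So the compatibility of $\sigma$ with multiplication is precisely the content of Lemma~\ref{lem:braidmul}, which is the key technical input and was already established.

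Finally, injectivity. Here I would use the PBW-type basis theorem of \cite[Proposition~3.6.4]{BDK20} quoted in the proof of Lemma~\ref{lem:embed}: the ordered monomials $\prod t_{\ell,c}^{d_{\ell,c}}$ (taken in lexicographic order in $\ell$, with $d_{\ell,c}\in\{0,1\}$ when $|c|=\bar1$) form a $\mathbb{C}(q)$-basis of $\mathsf{A}_{r+k,n}$; those with $\ell\le k$ are a basis of $\iota_0(\mathsf{A}_{k,n})$ and those with $\ell>k$ a basis of $\iota_k(\mathsf{A}_{r,n})$. Since in $\sigma(x\otimes y)=\iota_k(x)\iota_0(y)$ the generators from $\mathsf{A}_{r,n}$ (indices $>k$) all stand to the left of those from $\mathsf{A}_{k,n}$ (indices $\le k$), the image of a tensor-product basis monomial $\big(\prod_{i\le r}t_{ia}^{d}\big)\otimes\big(\prod_{j\le k}t_{jb}^{e}\big)$ is already an ordered PBW monomial of $\mathsf{A}_{r+k,n}$, and distinct such basis elements map to distinct PBW monomials; hence $\sigma$ sends a basis to a linearly independent set, so it is injective. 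Combined with surjectivity and the homomorphism property, $\sigma$ is an isomorphism of $\Uq(\mathfrak{q}_n)$-supermodule superalgebras. The only point needing a little care—and the place I would double-check—is matching the lexicographic order used in \cite{BDK20} with the left/right placement of the two blocks of generators under $\sigma$, i.e.\ that "indices in $\{k+1,\dots,r+k\}$ come after indices in $\{1,\dots,k\}$" agrees with the chosen ordering convention; once that is fixed the argument is routine.
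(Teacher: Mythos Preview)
Your proposal is correct and follows essentially the same approach as the paper: both use Lemma~\ref{lem:braidmul} as the key technical input to verify that $\sigma$ is multiplicative, both deduce the $\Uq(\mathfrak{q}_n)$-linearity from the fact that $\iota_k,\iota_0$ are module maps and $\mathsf{A}_{r+k,n}$ is a module superalgebra, and both appeal to the PBW basis of \cite[Proposition~3.6.4]{BDK20} for injectivity. Your verification of multiplicativity is in fact slightly cleaner than the paper's (you treat a general product $(x\otimes y)(x'\otimes y')$ directly, whereas the paper checks the three cases $(x'\otimes 1)(x\otimes y)$, $(x\otimes y)(1\otimes y')$, $(1\otimes y)(x\otimes 1)$ separately), and you are right to flag the ordering convention in the injectivity step---the paper glosses over exactly this point, but it is easily fixed by a graded dimension count once surjectivity is known.
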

\begin{proof}
We first prove that $\sigma$ is a homomorphism of associative superalgebras. By Lemma~\ref{lem:embed}, $\iota_k:\mathsf{A}_{r,n}\rightarrow\mathsf{A}_{r+k,n}$ and $\iota_0:\mathsf{A}_{k,n}\rightarrow\mathsf{A}_{r+k,n}$ are homomorphisms of associative superalgebras. For $x, x^{\prime}\in\mathsf{A}_{r,n}$ and $y, y^{\prime}\in\mathsf{A}_{k,n}$, we have
\begin{align*}
\sigma\left((x^{\prime}\otimes1)(x\otimes y)\right)
=&\sigma(x^{\prime}x\otimes y)=\iota_0(x^{\prime}x)\iota_k(y)
=\iota_0(x^{\prime})\iota_0(x)\iota_r(y)\\
=&\sigma(x^{\prime}\otimes1)\sigma(x\otimes y).\\
\sigma\left((x\otimes y)(1\otimes y^{\prime})\right)
=&\sigma(x\otimes yy^{\prime})=\iota_0(x)\iota_r(yy^{\prime})
=\iota_0(x)\iota_r(y)\iota_r(y^{\prime})\\
=&\sigma(x\otimes y)\sigma(1\otimes y^{\prime}).
\end{align*}

Note that $(1\otimes y)(x\otimes1)=\Theta(y\otimes x)$ in the braided tensor product $\mathsf{A}_{r,n}\otimes\mathsf{A}_{k,n}$. We assume that
$$\Theta(y\otimes x)=\sum_i x_i\otimes y_i,$$
where $x_i\in\mathsf{A}_{r,n}$ and $y_i\in\mathsf{A}_{k,n}$. It follows from Lemma~\ref{lem:braidmul} that
$$\sigma((1\otimes y)(x\otimes1))
=\sum_i\iota_k(x_i)\iota_0(y_i)=\iota_0(y)\iota_k(x)=\sigma(1\otimes y)\sigma(x\otimes1).$$
Hence, $\sigma$ is a homomorphism of associative superalgebras.

Since $\iota_k:\mathsf{A}_{r,n}\rightarrow\mathsf{A}_{r+k,n}$ and $\iota_0:\mathsf{A}_{k,n}\rightarrow\mathsf{A}_{r+k,n}$ are both homomorphisms of $\Uq(\mathfrak{q}_n)$-supermodules and $\mathsf{A}_{r+k,n}$ is a $\Uq(\mathfrak{q}_n)$-supermodule superalgebra, we have
\begin{align*}
\sigma\left(\Phi_u(x\otimes y)\right)
=&\sum\limits_{(u)}(-1)^{|u_{(2)}||x|}\sigma\left(\Phi_{u_{(1)}}(x)\otimes\Phi_{u_{(2)}}(y)\right)\\
=&\sum\limits_{(u)}(-1)^{|u_{(2)}||x|}\iota_k\left(\Phi_{u_{(1)}}(x)\right)\iota_0\left(\Phi_{u_{(2)}}(y)\right)\\
=&\sum\limits_{(u)}(-1)^{|u_{(2)}||x|}\Phi_{u_{(1)}}\left(\iota_k(x)\right)\Phi_{u_{(2)}}\left(\iota_0(y)\right)\\
=&\Phi_u\left(\iota_k(x)\iota_0(y)\right)\\
=&\Phi_u(\sigma(x\otimes y)).
\end{align*}
This shows that $\sigma$ is a homomorphism of $\Uq(\mathfrak{q}_n)$-supermodules.

The homomorphism $\sigma$ is surjective since $\mathsf{A}_{r+k, n}$ is generated by $t_{ia}$ for $i=1,\ldots,r+k, a\in I_{n|n}$. While the injectivity of $\sigma$ follows from the fact  that a $\mathbb{C}(q)$-basis of $\mathsf{A}_{r,n}$ (resp. $\mathsf{A}_{k,n}$) is given by monomials
$$\prod\limits_{i=1,\ldots,r, a\in I_{n|n}}t_{ia}^{d_{ia}},\quad \left(\text{resp}. \prod\limits_{i=1,\ldots,k, a\in I_{n|n}}t_{ia}^{d_{ia}} \right)$$
where $d_{ia}\geq0$ if $|a|=\bar{0}$ and $d_{ia}\in{0,1}$ if $|a|=\bar{1}$, the product is taken with respect to the lexicographic order (see \cite[Proposition~3.6.4]{BDK20}).
\end{proof}

Given two quantum coordinate superalgebras $\mathsf{A}_{r,n}$ and $\mathsf{A}_{k,n}$, the braided tensor product $\mathsf{A}_{r,n}\otimes\mathsf{A}_{k,n}$ is also a $\Uq(\mathfrak{q}_n)$-supermodule superalgebra, which is isomorphic to $\mathsf{A}_{r+k,n}$ by Theorem~\ref{thm:isoA}. Hence, a further braided tensor product $\left(\mathsf{A}_{r,n}\otimes\mathsf{A}_{k,n}\right)\otimes\mathsf{A}_{p,n}$ is permitted. It is also a $\Uq(\mathfrak{q}_n)$-supermodule superalgebra isomorphic to $\mathsf{A}_{r+k+p,n}$. Consequently, the braided tensor product of quantum coordinate superalgebras of type $Q$ is associative.

\begin{corollary}
There is an isomorphism of $\Uq(\mathfrak{q}_n)$-supermodule superalgebras
$$\left(\mathsf{A}_{r,n}\otimes\mathsf{A}_{k,n}\right)\otimes\mathsf{A}_{p,n}\cong\mathsf{A}_{r,n}\otimes\left(\mathsf{A}_{k,n}\otimes\mathsf{A}_{p,n}\right).\eqno{\qed}$$
\end{corollary}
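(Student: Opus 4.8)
The plan is to reduce the associativity of the braided tensor product to the already-established isomorphism $\sigma$ of Theorem~\ref{thm:isoA}. First I would note that both $\left(\mathsf{A}_{r,n}\otimes\mathsf{A}_{k,n}\right)\otimes\mathsf{A}_{p,n}$ and $\mathsf{A}_{r,n}\otimes\left(\mathsf{A}_{k,n}\otimes\mathsf{A}_{p,n}\right)$ are legitimate $\Uq(\mathfrak{q}_n)$-supermodule superalgebras: by Theorem~\ref{thm:isoA}, $\mathsf{A}_{r,n}\otimes\mathsf{A}_{k,n}$ is a $\Uq(\mathfrak{q}_n)$-supermodule superalgebra isomorphic to $\mathsf{A}_{r+k,n}$, so Proposition~\ref{prop:interwa} applies again to pair it with $\mathsf{A}_{p,n}$ (transporting the braiding $\Theta$ along $\sigma$), and similarly on the right. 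Concretely, applying Theorem~\ref{thm:isoA} twice gives isomorphisms of $\Uq(\mathfrak{q}_n)$-supermodule superalgebras
\[
\left(\mathsf{A}_{r,n}\otimes\mathsf{A}_{k,n}\right)\otimes\mathsf{A}_{p,n}\;\xrightarrow{\ \sigma_{r+k,p}\circ(\sigma_{r,k}\otimes 1)\ }\;\mathsf{A}_{r+k+p,n},
\]
and likewise $\mathsf{A}_{r,n}\otimes\left(\mathsf{A}_{k,n}\otimes\mathsf{A}_{p,n}\right)\xrightarrow{\ \sigma_{r,k+p}\circ(1\otimes\sigma_{k,p})\ }\mathsf{A}_{r+k+p,n}$, where I write $\sigma_{r,k}$ for the map $\sigma$ of Theorem~\ref{thm:isoA} associated with the pair $(r,k)$. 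Composing one with the inverse of the other yields the desired isomorphism.

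The key steps, in order: (i) invoke Theorem~\ref{thm:isoA} to see $\mathsf{A}_{r,n}\otimes\mathsf{A}_{k,n}\cong\mathsf{A}_{r+k,n}$ as $\Uq(\mathfrak{q}_n)$-supermodule superalgebras, which legitimizes forming $\left(\mathsf{A}_{r,n}\otimes\mathsf{A}_{k,n}\right)\otimes\mathsf{A}_{p,n}$ via Proposition~\ref{prop:interwa} with $\mathsf{A}_{r+k,n}$ in place of $\mathsf{A}_{r,n}$; (ii) identify this double braided tensor product with $\mathsf{A}_{r+k+p,n}$ by a second application of Theorem~\ref{thm:isoA}; (iii) repeat (i)--(ii) on the right-bracketed side to get $\mathsf{A}_{r,n}\otimes\left(\mathsf{A}_{k,n}\otimes\mathsf{A}_{p,n}\right)\cong\mathsf{A}_{r+k+p,n}$; (iv) conclude that both sides are isomorphic, via the composite $\bigl(\sigma_{r,k+p}\circ(1\otimes\sigma_{k,p})\bigr)^{-1}\circ\bigl(\sigma_{r+k,p}\circ(\sigma_{r,k}\otimes1)\bigr)$. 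In fact, unwinding the definitions, both composites send $x\otimes y\otimes z$ to $\iota(x)\iota'(y)\iota''(z)$ for the appropriate shift-embeddings into $\mathsf{A}_{r+k+p,n}$, so the isomorphism is simply the identity on the common underlying space $\mathsf{A}_{r,n}\otimes\mathsf{A}_{k,n}\otimes\mathsf{A}_{p,n}$ — meaning the two bracketings literally agree as superalgebras, not merely up to isomorphism.

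The one point requiring care — and the main (mild) obstacle — is step (i): one must check that transporting the braiding operator $\Theta$ of Proposition~\ref{prop:interwa} along the isomorphism $\sigma$ of Theorem~\ref{thm:isoA} produces exactly the braiding one would use to form $\mathsf{A}_{r+k,n}\otimes\mathsf{A}_{p,n}$, so that Proposition~\ref{prop:interwa} and Lemma~\ref{lem:braidmul} genuinely apply to the iterated product. This is essentially automatic because $\Theta$ is characterized uniquely (by \eqref{eq:theta1a}, \eqref{eq:theta1b}, \eqref{eq:theta2} together with the hexagon diagrams \eqref{eq:intwass1}, \eqref{eq:intwass2}), and $\sigma$ is a homomorphism of $\Uq(\mathfrak{q}_n)$-supermodule superalgebras respecting the generator matrices $T_r, T_k$; hence the transported braiding satisfies the same characterizing identities on generators and therefore coincides with the intended one. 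Given that, the associativity statement is a formal consequence, and no further computation is needed.
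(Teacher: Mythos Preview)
Your proposal is correct and follows exactly the paper's approach: the paper simply notes (in the paragraph preceding the corollary) that $\mathsf{A}_{r,n}\otimes\mathsf{A}_{k,n}\cong\mathsf{A}_{r+k,n}$ by Theorem~\ref{thm:isoA}, so a further braided tensor product with $\mathsf{A}_{p,n}$ is permitted and yields $\mathsf{A}_{r+k+p,n}$, and likewise for the other bracketing. Your discussion of transporting $\Theta$ along $\sigma$ via its uniqueness is in fact more explicit than the paper's terse justification, but the strategy is identical.
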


\begin{corollary}
\label{cor:isoArcopy}
The braided tensor product $\mathsf{A}_{1,n}^{\otimes r}$ of $r$-copies of $\mathsf{A}_{1,n}$ is isomorphic to $\mathsf{A}_{r,n}$ as $\Uq(\mathfrak{q}_n)$-supermodule superalgebras.\qed
\end{corollary}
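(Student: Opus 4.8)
The plan is to prove Corollary~\ref{cor:isoArcopy} by an easy induction on $r$ using Theorem~\ref{thm:isoA}. The base case $r=1$ is trivial: $\mathsf{A}_{1,n}^{\otimes 1}=\mathsf{A}_{1,n}$. For the inductive step, suppose the braided tensor product $\mathsf{A}_{1,n}^{\otimes(r-1)}$ is isomorphic to $\mathsf{A}_{r-1,n}$ as $\Uq(\mathfrak{q}_n)$-supermodule superalgebras. By the preceding corollary the braided tensor product of quantum coordinate superalgebras of type $Q$ is associative, so $\mathsf{A}_{1,n}^{\otimes r}$ is unambiguously defined and equals $\left(\mathsf{A}_{1,n}^{\otimes(r-1)}\right)\otimes\mathsf{A}_{1,n}$. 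The induction hypothesis gives an isomorphism of $\Uq(\mathfrak{q}_n)$-supermodule superalgebras $\mathsf{A}_{1,n}^{\otimes(r-1)}\cong\mathsf{A}_{r-1,n}$; since the braiding operator $\Theta$ and the braided multiplication are natural in the factors, this isomorphism induces an isomorphism of braided tensor products $\left(\mathsf{A}_{1,n}^{\otimes(r-1)}\right)\otimes\mathsf{A}_{1,n}\cong\mathsf{A}_{r-1,n}\otimes\mathsf{A}_{1,n}$. Finally, Theorem~\ref{thm:isoA} with the substitution $(r,k)\mapsto(r-1,1)$ yields $\mathsf{A}_{r-1,n}\otimes\mathsf{A}_{1,n}\cong\mathsf{A}_{r,n}$. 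Composing these three isomorphisms completes the step.

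The only point requiring a word of care is the middle isomorphism, i.e.\ that an isomorphism $f\colon\mathsf{A}_{1,n}^{\otimes(r-1)}\xrightarrow{\ \sim\ }\mathsf{A}_{r-1,n}$ of $\Uq(\mathfrak{q}_n)$-supermodule superalgebras is compatible with forming the braided tensor product against $\mathsf{A}_{1,n}$. Concretely, one checks that $\Theta$ for the pair $(\mathsf{A}_{r-1,n},\mathsf{A}_{1,n})$ and $\Theta$ for the pair $(\mathsf{A}_{1,n}^{\otimes(r-1)},\mathsf{A}_{1,n})$ intertwine via $f$: by the uniqueness clause in Proposition~\ref{prop:interwa} it is enough to verify this on the defining data \eqref{eq:theta1a}--\eqref{eq:theta2}, and since $f$ sends the generator matrix of $\mathsf{A}_{1,n}^{\otimes(r-1)}$ to that of $\mathsf{A}_{r-1,n}$ (after the identification coming from Theorem~\ref{thm:isoA}), the relation \eqref{eq:theta2} is preserved. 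Hence $f\otimes\mathrm{id}$ is an algebra homomorphism for the braided multiplications, and being bijective and $\Uq(\mathfrak{q}_n)$-equivariant it is an isomorphism of $\Uq(\mathfrak{q}_n)$-supermodule superalgebras.

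I do not expect a genuine obstacle here; the substance of the result is entirely contained in Theorem~\ref{thm:isoA} and the associativity corollary, and the remaining argument is the standard ``iterate a binary isomorphism'' induction. The one place to be slightly careful is bracketing: one should fix a parenthesization of $\mathsf{A}_{1,n}^{\otimes r}$ (say the left-nested one) and invoke the associativity corollary to know the result is independent of the choice, so that the inductive decomposition $\mathsf{A}_{1,n}^{\otimes r}=\left(\mathsf{A}_{1,n}^{\otimes(r-1)}\right)\otimes\mathsf{A}_{1,n}$ is legitimate.
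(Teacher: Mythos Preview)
Your proposal is correct and follows exactly the approach the paper intends: the paper offers no explicit proof (the corollary is stated with a terminal \qed), treating the result as an immediate consequence of iterating Theorem~\ref{thm:isoA} together with the associativity corollary, which is precisely your induction. If anything, your second paragraph is more careful than necessary in this setting: in the paper's framework the braided tensor product $\left(\mathsf{A}_{1,n}^{\otimes(r-1)}\right)\otimes\mathsf{A}_{1,n}$ is \emph{defined} by first identifying $\mathsf{A}_{1,n}^{\otimes(r-1)}$ with $\mathsf{A}_{r-1,n}$ via Theorem~\ref{thm:isoA} and then using the braiding $\Theta$ of Proposition~\ref{prop:interwa}, so the compatibility of $f$ with $\Theta$ holds by construction rather than requiring a separate check.
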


\begin{remark}\label{IsoQQS}
 The quantum coordinate superalgebra $\mathsf{A}_{1,n}$ is isomorphic to quantum supersymmetric algebra $\mathrm{S}_q(V)$, that is a unital associative superalgebra presented by generators $v_a$ for $a\in I_{n|n}$, and relation:
\begin{align*}
qv_av_b=q^{\varphi(a,b)}(-1)^{|a||b|}v_bv_a+\delta_{a<b}\xi v_av_b+\delta_{-a<b}\xi(-1)^{|b|}v_{-a}v_{-b},\quad a,b\in I_{n|n}.
\end{align*}
By \cite[Proposition~3.6.4]{BDK20}, there is a parity preserving vector superspace isomorphism between $\mathsf{A}_{r,n}$ and $\mathrm{S}_q(V)^{\otimes r}$. According to Corollary~\ref{cor:isoArcopy}, $\mathrm{S}_q(V)^{\otimes r}$ is understood as an associative superalgebra obtained by the braided tensor product (see \cite{BZ08} for the braided symmetric algebra in the case of a quantum group). It is isomorphic to $\mathsf{A}_{r,n}$ as $\Uq(\mathfrak{q}_n)$-supermodule superalgebras. Consequently, the quantum coordinate superalgebra $\mathsf{A}_{r,n}$ (or $\mathrm{S}_q(V)^{\otimes r}$) is a flat deformation of supersymmetric algebra $\mathrm{Sym}\left(V^{\oplus r}\right)$. 
\end{remark}
\bigskip

Next, we construct a quantum analogue of $\mathrm{Sym}\left(\Pi(V)^{\oplus k}\right)$, where $\Pi$ is the parity reversing functor. For any $\mathrm{U}_q(\mathfrak{q}_n)$-supermodule $M=M_{\bar{0}}\oplus M_{\bar{1}}$, define
\begin{equation*}
\Pi(M)=\Pi(M)_{\bar{0}}\oplus \Pi(M)_{\bar{1}},\quad \Pi(M)_{i}=M_{i+\bar{1}},~\forall i\in\mathbb{Z}_2.
\end{equation*}
Then $\Pi(M)$ is also a $\mathrm{U}_q(\mathfrak{q}_n)$-supermodule with the same action.

\begin{definition}
We define $\mathsf{A}_{k,n}^{\Pi}$ to be the unital associative superalgebra presented by the generators $t_{ia}^{\pi}$ of parity $|a|+\bar{1}$ for $i=1,\ldots,k$ and $a\in I_{n|n}$ and the defining relation
\begin{equation}
R_+^{12}\check{T}_+^{1[3]}\check{T}_+^{2[3]}=\check{T}_+^{2[3]}\check{T}_+^{1[3]}S_J^{12},
\label{eq:ApiRTT}
\end{equation}
where $\check{T}_+=\sum\limits_{i=1}^k\sum\limits_{a\in I_{n|n}}E_{ia}\otimes t_{ia}^{\pi}$, the tensor matrix $R_+$ is given in \eqref{eq:rmatrix},
\begin{equation}
\begin{aligned}
S_{J}=&(1\otimes J)S(1\otimes J)\\
=&-\sum_{a,b\in I_{n|n}}q^{-\varphi(a,b)}E_{aa}\otimes E_{bb}+\xi\sum\limits_{b<a}(-1)^{|a|}(E_{ba}+E_{-b,-a})\otimes E_{ab}
,
\end{aligned}
\label{eq:stilde}
\end{equation}
and $J=\sum\limits_{a\in I_{n|n}}(-1)^{|a|}E_{-a,a}$.
\end{definition}

\begin{proposition}
$\mathsf{A}_{k,n}^{\Pi}$ is a $\Uq(\mathfrak{q}_n)$-supermodule superalgebra with the $\Uq(\mathfrak{q}_n)$-action $\Phi^{\pi}$ determined by 
\begin{equation}
L^{[2]3}\underset{\Phi^{\pi}}{\cdot}\check{T}_+^{1[2]}=\check{T}_+^{1[2]}S^{13}.\label{eq:PhiPi}
\end{equation}
\end{proposition}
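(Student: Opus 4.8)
The plan is to mimic the argument used for $\mathsf{A}_{r,n}$ (the statement that $\mathsf{A}_{r,n}$ is a $\Uq(\mathfrak{q}_n)$-supermodule superalgebra with action $\Phi$ given by \eqref{eq:QCAUq}), adapting every matrix identity to account for the twist by $J$. First I would check that $\Phi^\pi$ as specified by \eqref{eq:PhiPi} actually defines a $\Uq(\mathfrak{q}_n)$-module structure on the free superalgebra generated by the $t_{ia}^\pi$, i.e. that the assignment $L^{[2]3}\underset{\Phi^\pi}{\cdot}\check{T}_+^{1[2]}=\check{T}_+^{1[2]}S^{13}$ is compatible with the defining relations \eqref{eq:SLL} of $\Uq(\mathfrak{q}_n)$; this reduces, exactly as in \cite{CW23}, to the quantum Yang--Baxter equation $S^{12}S^{13}S^{23}=S^{23}S^{13}S^{12}$ for $S$, since the $J$-twist only enters through the relation on the $\check{T}_+$'s, not through the form of the action on generators. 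So the module axiom is essentially free.

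The substantive point is that $\Phi^\pi$ descends from the free superalgebra to the quotient $\mathsf{A}_{k,n}^\Pi$, i.e. that it preserves the defining relation \eqref{eq:ApiRTT}, and that it then satisfies the supermodule-superalgebra compatibility of Definition~\ref{def:smsa}. For the first, I would apply $L$ (in the appropriate tensor slot) to both sides of $R_+^{12}\check{T}_+^{1[3]}\check{T}_+^{2[3]}=\check{T}_+^{2[3]}\check{T}_+^{1[3]}S_J^{12}$, using $\Delta(L)=L\otimes L$ to distribute the action across the product of two $\check{T}_+$-factors, and then push the resulting $S$'s through using \eqref{eq:PhiPi}. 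The left side produces $R_+^{12}\check{T}_+^{1[3]}\check{T}_+^{2[3]}S^{14}S^{24}$ (with $4$ the auxiliary $\Uq$-slot), and the right side produces $\check{T}_+^{2[3]}\check{T}_+^{1[3]}S_J^{12}S^{14}S^{24}$ after commuting $S_J^{12}$ past $S^{14}S^{24}$ — these act in disjoint tensor factors except for the need to verify that $S_J^{12}$ and the product $S^{14}S^{24}$ commute, which holds because $S_J^{12}$ lives in slots $1,2$ while $S^{14},S^{24}$ couple slots $1,4$ and $2,4$, so the check is that $S_J$ in slots $(1,2)$ commutes with $S^{13}S^{23}$ in an $\mathrm{End}(V_q)^{\otimes 3}$ — equivalently a mixed Yang--Baxter/intertwining identity between $S$ and $S_J=(1\otimes J)S(1\otimes J)$. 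Equating the two sides then reduces, after cancelling $\check{T}_+$'s using the already-established relation once more, to an identity among $S$, $S_J$ and $R_+$ that should follow from the quantum Yang--Baxter equation for $S$ together with the definition $S_J=(1\otimes J)S(1\otimes J)$; I would verify it directly at the level of $\mathrm{End}(V_q)^{\otimes k}$-valued matrices.

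Once the relation is preserved, the supermodule-superalgebra identity $u.(ab)=\sum_{(u)}(-1)^{|u_{(2)}||a|}(u_{(1)}.a)(u_{(2)}.b)$ holds on all of $\mathsf{A}_{k,n}^\Pi$ by the standard argument: it holds on generators by the very form of $\Delta(L)=L\otimes L$ combined with \eqref{eq:PhiPi}, and both sides are derivations-up-to-sign in each argument, so it extends multiplicatively; the counit condition $u.1=\varepsilon(u)1$ is immediate from $\varepsilon(L)=1$.

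The main obstacle I anticipate is the bookkeeping for the $J$-twist and the parity shift. The generators $t_{ia}^\pi$ have parity $|a|+\bar 1$ rather than $|a|$, so every Koszul sign that appeared in the $\mathsf{A}_{r,n}$ computation is shifted; and $S_J$ replacing $S$ on the right-hand side of \eqref{eq:ApiRTT} means the relevant braiding identity is the $J$-conjugated quantum Yang--Baxter equation rather than the plain one. The cleanest route is probably to introduce the change of generators $t_{ia}^\pi \leftrightarrow \sum_b J_{ab}\, t_{i,b}$ (up to sign) suggested by the very definition of $S_J$, which should convert \eqref{eq:ApiRTT} into \eqref{eq:ArlnsM} and \eqref{eq:PhiPi} into \eqref{eq:QCAUq}, thereby reducing the whole proposition to the already-known statement for $\mathsf{A}_{k,n}$ — indeed this is exactly the isomorphism $\mathsf{A}_{k,n}^\Pi\cong\mathsf{A}_{k,n}$ advertised in the introduction, so it is natural to prove that isomorphism first and deduce the proposition as a corollary. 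If one prefers to keep the proof self-contained at this point in the paper, the direct matrix computation sketched above works but requires care with the $(-1)^{|a|}$ factors threaded through $J$.
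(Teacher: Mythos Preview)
Your proposal is essentially the paper's own argument: verify the module axiom via the Yang--Baxter equation for $S$, extend to the free algebra via $\Delta(L)=L\otimes L$, and then check that the action preserves \eqref{eq:ApiRTT} by reducing to a matrix identity in $\mathrm{End}(V_q)^{\otimes 3}$. The key identity you isolate is exactly $S_J^{12}S^{14}S^{24}=S^{24}S^{14}S_J^{12}$; the paper proves it in one line by writing $S_J^{12}=J^{\underline{2}}S^{12}J^{\underline{2}}$, using $(J\otimes 1)S=S(J\otimes 1)$ to slide the $J$'s past $S^{14}$ and $S^{24}$, and invoking the ordinary QYBE in between --- so your ``mixed Yang--Baxter/intertwining identity'' is spot on, though note it is \emph{not} a literal commutation (slots $1,2$ genuinely overlap with slots $1,4$ and $2,4$), and $R_+$ does not enter the final identity at all.

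Your alternative route --- prove the isomorphism $\tau:\mathsf{A}_{k,n}^\Pi\to\mathsf{A}_{k,n}$ via $\check{T}_+\mapsto T_+(J\otimes 1)$ first and then transport the module-superalgebra structure --- is clean and would work, but the paper orders things the other way: it proves the present proposition directly and only afterwards (Proposition~\ref{prop:isoApi}) shows $\tau$ is an isomorphism of $\Uq(\mathfrak{q}_n)$-supermodule superalgebras, which of course presupposes that both sides already carry such a structure. So if you take that shortcut you must be careful to set up the $\Uq(\mathfrak{q}_n)$-action on $\mathsf{A}_{k,n}^\Pi$ as the transport of $\Phi$ along $\tau$ and then observe it agrees with \eqref{eq:PhiPi}; otherwise there is a mild circularity.
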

\begin{proof}
Let $W\subset\mathsf{A}_{k,n}^{\Pi}$ be the $\mathbb{C}(q)$-subspace spanned by $t_{ia}^{\pi}$ for $i=1,\ldots, k$ and $a\in I_{n|n}$. We first show that \eqref{eq:PhiPi} defines a $\Uq(\mathfrak{q}_n)$-supermodule structure on $W$. Note that $L$ is the generator matrix of $\Uq(\mathfrak{q}_n)$, we verify that
\begin{align*}
\left(L^{[2]3}L^{[2]4}S^{34}\right)\underset{\Phi^{\pi}}{\cdot}\check{T}_+^{1[2]}
=&L^{[2]3}\underset{\Phi^{\pi}}{\cdot}\left(L^{[2]4}\underset{\Phi^{\pi}}{\cdot}\check{T}_+^{1[2]}\right)S^{34}
=L^{[2]3}\underset{\Phi^{\pi}}{\cdot}\check{T}_+^{1[2]}S^{14}S^{34}\\
=&\check{T}_+^{1[2]}S^{13}S^{14}S^{34},\\
\left(S^{34}L^{[2]4}L^{[2]3}\right)\underset{\Phi^{\pi}}{\cdot}\check{T}_+^{1[2]}
=&S^{34}L^{2[4]}\underset{\Phi^{\pi}}{\cdot}\left(L^{[2]3}\underset{\Phi^{\pi}}{\cdot}\check{T}_+^{1[2]}\right)
=S^{34}L^{2[4]}\underset{\Phi^{\pi}}{\cdot}\check{T}_+^{1[2]}S^{13}\\
=&S^{34}\check{T}_+^{1[2]}S^{14}S^{13}
=\check{T}_+^{1[2]}S^{34}S^{14}S^{13}.
\end{align*}
Since $S$ satisfies the quantum Yang-Baxter equation, \eqref{eq:PhiPi} determines a well-defined $\Uq(\mathfrak{q}_n)$-action on $W$.

The $\Uq(\mathfrak{q}_n)$-action is extended to the free associative superalgebra generated by $t_{ia}^{\pi}$, $i=1,\ldots, k$ and $a\in I_{n|n}$ according to the definition of a $\Uq(\mathfrak{q}_n)$-supermodule superalgebra:
\begin{equation}
\Phi^{\pi}_u(xy)=\sum\limits_{(u)}(-1)^{|u_{(2)}||x|}\Phi^{\pi}_{u_{(1)}}(x)\Phi^{\pi}_{u_{(2)}}(y),\quad u\in\Uq(\mathfrak{q}_n), x,y\in\mathsf{A}_{k,n}^{\pi}.\label{eq:modalg}
\end{equation}
It suffices to show that the action $\Phi^{\pi}_{L_{ab}}$ preserves the defining relation \eqref{eq:ApiRTT} of $\mathsf{A}_{k,n}^{\Pi}$ for $a,b\in I_{n|n}$.

By \eqref{eq:modalg}, we have
\begin{align*}
L^{[3]4}\underset{\Phi^{\pi}}{\cdot}\left(\check{T}_+^{1[3]}\check{T}_+^{2[3]}\right)
=\left(L^{[3]4}\underset{\Phi^{\pi}}{\cdot}\check{T}_+^{1[3]}\right)
\left(L^{[3]4}\underset{\Phi^{\pi}}{\cdot}\check{T}_+^{2[3]}\right)
=\check{T}_+^{1[3]}S^{14}\check{T}_+^{2[3]}S^{24},\\
L^{[3]4}\underset{\Phi^{\pi}}{\cdot}\left(\check{T}_+^{2[3]}\check{T}_+^{1[3]}\right)
=\left(L^{[3]4}\underset{\Phi^{\pi}}{\cdot}\check{T}_+^{2[3]}\right)
\left(L^{[3]4}\underset{\Phi^{\pi}}{\cdot}\check{T}_+^{1[3]}\right)
=\check{T}_+^{2[3]}S^{24}\check{T}_+^{1[3]}S^{14}.
\end{align*}
Then the defining relation \eqref{eq:ApiRTT} implies that
\begin{align*}
L^{[3]4}\underset{\Phi^{\pi}}{\cdot}\left(R_+^{12}\check{T}_+^{1[3]}\check{T}_+^{2[3]}\right)
=&R_+^{12}\check{T}_+^{1[3]}S^{14}\check{T}_+^{2[3]}S^{24}
=\check{T}_+^{2[3]}\check{T}_+^{1[3]}S_J^{12}S^{14}S^{24},\\
L^{[3]4}\underset{\Phi^{\pi}}{\cdot}\left(\check{T}_+^{2[3]}\check{T}_+^{1[3]}S_J^{12}\right)
=&\check{T}_+^{2[3]}\check{T}_+^{1[3]}S^{24}S^{14}S_J^{12}.
\end{align*}
Note that $S_J=(1\otimes J)S(1\otimes J)$ and $(J\otimes 1)S=S(J\otimes 1)$, we have
\begin{align*}
S_J^{12}S^{14}S^{24}=&J^{\underline{2}}S^{12}J^{\underline{2}}S^{14}S^{24}=J^{\underline{2}}S^{12}S^{14}S^{24}J^{\underline{2}}\\
=&J^{\underline{2}}S^{24}S^{14}S^{12}J^{\underline{2}}
=S^{24}S^{14}J^{\underline{2}}S^{12}J^{\underline{2}}
=S^{24}S^{14}S_J^{12},
\end{align*}
where $J^{\underline{2}}=1\otimes J\otimes 1\otimes1$. Hence, 
$$L^{[3]4}\underset{\Phi^{\pi}}{\cdot}\left(R_+^{12}\check{T}_+^{1[3]}\check{T}_+^{2[3]}\right)
=L^{[3]4}\underset{\Phi^{\pi}}{\cdot}\left(\check{T}_+^{2[3]}\check{T}_+^{1[3]}S_J^{12}\right).$$
This shows that \eqref{eq:PhiPi} determines a well-defined $\Uq(\mathfrak{q}_n)$-action on $\mathsf{A}_{k,n}^{\Pi}$, under which $\mathsf{A}_{k,n}^{\Pi}$ is a $\Uq(\mathfrak{q}_n)$-supermodule superalgebra.
\end{proof}

The defining relation \eqref{eq:ApiRTT} for $\mathsf{A}_{k,n}^{\Pi}$ can be written in terms of generators as:
\begin{align*}
q^{\delta_{ij}}t^{\pi}_{ia}t^{\pi}_{jb}-(-1)^{|a||b|}q^{-\varphi(a,b)}t^{\pi}_{jb}t^{\pi}_{ia}
=\xi\left(\delta_{b<a}-\delta_{j<i}\right)t^{\pi}_{ja}t^{\pi}_{ib}
+(-1)^{|b|}\xi\delta_{b<-a}t^{\pi}_{j,-a}t^{\pi}_{i,-b}.
\end{align*}
One observes that the classical limit of $\mathsf{A}_{k,n}^{\Pi}$ as $q\mapsto1$ is isomorphic to the associative superalgebra $\mathrm{Sym}\left(\Pi(V)^{\oplus k}\right)$. Moreover, it follows from \eqref{eq:PhiPi} that, for each fixed $i=1,\ldots,k$, the $\mathbb{C}(q)$ sub-superspace of $\mathsf{A}_{k,n}^{\Pi}$ spanned by $t_{ia}^{\pi}$ with $a\in I_{n|n}$ is a $\Uq(\mathfrak{q}_n)$-sub-supermodule that is isomorphic to $\Pi(V)$. Consequently, $\mathsf{A}_{k,n}^{\Pi}$ is a quantum analogue of $\mathrm{Sym}\left(\Pi(V)^{\oplus k}\right)$.

It has been shown in \cite[Remark~3.2]{BK21} that the $\mathrm{U}(\mathfrak{q}_n)$-supermodule superalgebra $\mathrm{Sym}\left(\Pi(V)^{\oplus k}\right)$ is isomorphic to $\mathrm{Sym}\left(V^{\oplus k}\right)$. Such an isomorphism is also valid in the quantum case.

\begin{proposition}
\label{prop:isoApi}
There is an isomorphism of $\Uq(\mathfrak{q}_n)$-supermodule superalgebras
$$\tau:\mathsf{A}_{k,n}^{\Pi}\rightarrow\mathsf{A}_{k,n},$$
such that
\begin{equation}
\tau\left(\check{T}_+\right)=T_+(J\otimes1).
\label{eq:isoAAp}
\end{equation}
\end{proposition}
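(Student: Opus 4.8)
The plan is to exhibit the map $\tau$ explicitly on generators and then verify three things: that it is a well-defined superalgebra homomorphism, that it intertwines the $\Uq(\mathfrak{q}_n)$-actions $\Phi^{\pi}$ and $\Phi$, and that it is bijective. Writing \eqref{eq:isoAAp} in components, $\tau\bigl(\check{T}_+\bigr)=T_+(J\otimes1)$ reads $\tau(t_{ia}^{\pi})=\sum_{b\in I_{n|n}}t_{ib}(J)_{ba}=(-1)^{|a|}t_{i,-a}$ for $i=1,\dots,k$ and $a\in I_{n|n}$ (using $J=\sum_a(-1)^{|a|}E_{-a,a}$), and since $|a|+\bar1=|{-a}|$ this assignment is parity-preserving, matching the prescribed parity $|a|+\bar1$ of $t_{ia}^{\pi}$.

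First I would check that $\tau$ respects the defining relation. The relation \eqref{eq:ApiRTT} for $\mathsf{A}_{k,n}^{\Pi}$ is $R_+^{12}\check{T}_+^{1[3]}\check{T}_+^{2[3]}=\check{T}_+^{2[3]}\check{T}_+^{1[3]}S_J^{12}$. Applying $\tau$ and substituting $\check{T}_+ \mapsto T_+(J\otimes1)$, the left side becomes $R_+^{12}T_+^{1[3]}J^{\underline1}T_+^{2[3]}J^{\underline2}$ where $J^{\underline1}=J\otimes1\otimes1$, $J^{\underline2}=1\otimes J\otimes1$ (I will set up the index bookkeeping carefully). Since $J$ acts in the $V_q$-factor (slot $3$) and $R_+$ acts in slots $1,2$, and the $T_+$ matrices have their auxiliary indices in slots $1,2$ and coordinate index in slot $3$, the factors $J^{\underline1}$ (acting on slot $1$... here I mean the copies of $\mathrm{End}(V_q)$ carrying the $\mathbb{C}^{k|k}$-type indices) commute past $R_+$ and past the $T_+$ in the other slot. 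After collecting the $J$'s one lands on $R_+^{12}T_+^{1[3]}T_+^{2[3]}(J\otimes J)^{[33]}$-type expression; similarly the right side gives $T_+^{2[3]}T_+^{1[3]}(J\otimes J)S_J^{12}$-type expression but here one must pull the second $J$ through $S_J^{12}$ rather than trivially. The key identity to invoke is precisely the defining relation \eqref{eq:stilde}, $S_J=(1\otimes J)S(1\otimes J)$, together with $(J\otimes1)S=S(J\otimes1)$; combining these shows $(J\otimes J)S_J^{12}=S^{12}(J\otimes J)$ in the appropriate sense, so that the image of relation \eqref{eq:ApiRTT} under $\tau$ is exactly $R_+^{12}T_+^{1[3]}T_+^{2[3]}=T_+^{2[3]}T_+^{1[3]}S^{12}$, the defining relation \eqref{eq:ArlnsM} of $\mathsf{A}_{k,n}$. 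Hence $\tau$ is a well-defined homomorphism of associative superalgebras.

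Next I would verify $\Uq(\mathfrak{q}_n)$-equivariance. The action on $\mathsf{A}_{k,n}^{\Pi}$ is $L^{[2]3}\cdot_{\Phi^{\pi}}\check{T}_+^{1[2]}=\check{T}_+^{1[2]}S^{13}$ and on $\mathsf{A}_{k,n}$ it is $L^{[2]3}\cdot_{\Phi}T_+^{1[2]}=T_+^{1[2]}S^{13}$. Applying $\tau$, one needs $L^{[2]3}\cdot_{\Phi}\bigl(T_+^{1[2]}J^{\underline2}\bigr)=T_+^{1[2]}J^{\underline2}S^{13}$, which follows from the $\mathsf{A}_{k,n}$-action relation once one checks $J^{\underline2}$ (acting in the $V_q$-slot carrying the coordinate index) commutes with $S^{13}$ — but $S^{13}$ acts in slots $1$ and $3$ while $J^{\underline2}$ is in slot $2$, so they trivially commute; equivariance on generators then extends to all of $\mathsf{A}_{k,n}^{\Pi}$ since both sides are supermodule superalgebras and $\tau$ is a superalgebra map. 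Finally, bijectivity: $J$ is invertible (indeed $J^2=\pm1$ up to signs, in any case $\det J\neq0$), so the inverse assignment $t_{ia}\mapsto\sum_b t_{ib}^{\pi}(J^{-1})_{ba}$ gives a two-sided inverse by the same computation, or more cleanly one notes $\tau$ sends the PBW-type monomial basis of $\mathsf{A}_{k,n}^{\Pi}$ (from \cite[Proposition~3.6.4]{BDK20}) to a basis of $\mathsf{A}_{k,n}$.

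The main obstacle I anticipate is purely the matrix bookkeeping in the second step: keeping straight which tensor slots the various copies of $\mathrm{End}(V_q)$ and of $J$, $R_+$, $S$, $S_J$ live in, and correctly tracking the Koszul signs when commuting $J$ (an odd-degree-shifting operator) past the $T_+$ matrices whose entries are odd in the relevant components. The conceptual content is entirely captured by the single identity relating $S$, $S_J$ and $J$, namely \eqref{eq:stilde} combined with $(J\otimes1)S=S(J\otimes1)$; once that is in hand the rest is a careful but routine transcription.
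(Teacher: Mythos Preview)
Your approach is essentially the paper's own: check the defining relation via the identities $S_J=(1\otimes J)S(1\otimes J)$ and $(J\otimes1)S=S(J\otimes1)$, exhibit the inverse explicitly, and verify equivariance on generators. The well-definedness argument and the bijectivity argument are fine as sketched.

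There is one concrete slip in your equivariance step. In the expression $\tau(\check{T}_+^{1[2]})=T_+^{1[2]}(J\otimes1)$, the factor $J$ lands in slot~$1$ (the matrix slot of $T_+$), not slot~$2$; so what you need is $J^{\underline{1}}S^{13}=S^{13}J^{\underline{1}}$. This is \emph{not} a trivial commutation of operators living in disjoint tensor slots---both act in slot~$1$---but it does hold, and it is precisely the identity $(J\otimes1)S=S(J\otimes1)$ that you already invoked in the first part. The paper's computation reads
\[
\tau\bigl(L^{[2]3}\underset{\Phi^{\pi}}{\cdot}\check{T}_+^{1[2]}\bigr)=T_+^{1[2]}J^{\underline{1}}S^{13},
\qquad
L^{[2]3}\underset{\Phi}{\cdot}\tau\bigl(\check{T}_+^{1[2]}\bigr)=T_+^{1[2]}S^{13}J^{\underline{1}},
\]
and equality follows from that same identity. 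This is exactly the bookkeeping hazard you flagged in your final paragraph; once you correct the slot and invoke the right reason, the argument goes through unchanged.
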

\begin{proof}
We first show that \eqref{eq:isoAAp} determines a well-defined homomorphism of associative superalgebras $\mathsf{A}_{k,n}^{\Pi}\rightarrow\mathsf{A}_{k,n}$. Since $\mathsf{A}_{k,n}^{\Pi}$ is generated by entries of the generator matrix $\check{T}_+$, it suffices to verify that \eqref{eq:isoAAp} preserves the defining relation \eqref{eq:ApiRTT} for $\mathsf{A}_{k,n}^{\Pi}$.
\begin{align*}
\tau\left(R_+^{12}\check{T}_+^{1[3]}\check{T}_+^{2[3]}\right)
=&R_+^{12}T_+^{1[3]}J^{\underline{1}}T_+^{2[3]}J^{\underline{2}}
=T_+^{2[3]}T_+^{1[3]}S^{12}J^{\underline{1}}J^{\underline{2}},\\
\tau\left(\check{T}_+^{2[3]}\check{T}_+^{1[3]}S_J^{12}\right)
=&T_+^{2[3]}J^{\underline{2}}T_+^{1[3]}J^{\underline{1}}S_J^{12}
=T_+^{2[3]}T_+^{1[3]}J^{\underline{2}}J^{\underline{1}}S_J^{12},
\end{align*}
where $J^{\underline{1}}=J\otimes1\otimes1$ and $J^{\underline{2}}=1\otimes J\otimes 1$.

Since 
\begin{align*}
J^{\underline{2}}J^{\underline{1}}S_J^{12}
=J^{\underline{2}}J^{\underline{1}}J^{\underline{2}}S^{12}J^{\underline{2}}
=-J^{\underline{2}}J^{\underline{2}}S^{12}J^{\underline{1}}J^{\underline{2}}
=S^{12}J^{\underline{1}}J^{\underline{2}},
\end{align*}
we have
$$\tau\left(R_+^{12}\check{T}_+^{1[3]}\check{T}_+^{2[3]}\right)=\tau\left(\check{T}_+^{2[3]}\check{T}_+^{1[3]}S_J^{12}\right),$$
and \eqref{eq:isoAAp} determines a superalgebra homomorphism $\tau:\mathsf{A}_{k,n}^{\Pi}\rightarrow\mathsf{A}_{k,n}$.

The superalgebra homomorphism $\tau$ is an isomorphism since it has the inverse $\tau^{-1}:\mathsf{A}_{k,n}\rightarrow\mathsf{A}_{k,n}^{\Pi}$ given by
$$\tau^{-1}\left(T_+\right)=-\check{T}_+(J\otimes1).$$

Moreover, we compute that
\begin{align*}
\tau\left(L^{[2]3}\underset{\Phi^{\pi}}{\cdot}\check{T}_+^{1[2]}\right)=&\tau\left(\check{T}_+^{1[2]}S^{13}\right)=T_+^{1[2]}J^{\underline{1}}S^{13},\\
L^{[2]3}\underset{\Phi^{\pi}}{\cdot}\tau\left(\check{T}_+^{1[2]}\right)=&L^{[2]3}\underset{\Phi^{\pi}}{\cdot}T_+^{1[2]}J^{\underline{1}}=T_+^{1[2]}S^{13}J^{\underline{1}},
\end{align*}
which shows that $\tau$ is also a homomorphism of $\Uq(\mathfrak{q}_n)$-supermodules since $J^{\underline{1}}S^{13}=S^{13}J^{\underline{1}}$.
\end{proof}

\section{Dual Quantum Coordinate Superalgebras}\label{sec:dqcsa}

As a quantum analogue of the supersymmetric algebra $\mathrm{Sym}\left(V^{*\oplus s}\right)$, the \textit{dual quantum coordinate superalgebra $\bar{\mathsf{A}}_{s,n}$} was introduced in \cite[Section~2]{CW23}. This section is devoted to discussing a braided tensor product between two dual quantum coordinate superalgebras $\bar{\mathsf{A}}_{s,n}$ and $\bar{\mathsf{A}}_{l,n}$. 

\textit{The dual quantum coordinate superalgebra} $\bar{\mathsf{A}}_{s,n}$ is a unital associative superalgebra generated by elements $\bar{t}_{\alpha b}$ with $\alpha\in I_{s|s}$ and $b\in I_{n|n}$, which satisfy the following relations:
\begin{eqnarray}
\bar{t}_{\alpha,b}&=&(-1)^{|\alpha|+|b|}\bar{t}_{-\alpha,-b},\label{eq:DQCA1}\\
\bar{T}^{1[3]}\bar{T}^{2[3]}S^{12}&=&S^{12}\bar{T}^{2[3]}\bar{T}^{1[3]},\label{eq:DQCA2}
\end{eqnarray}
where $\bar{T}=\sum\limits_{\alpha\in I_{s|s},b\in I_{n|n}}(-1)^{|b|(|\alpha|+|b|)} E_{b\alpha}\otimes \bar{t}_{\alpha b}$. 

\begin{lemma}
\label{lem:DQCAP2}
An alternative presentation of the superalgebra $\bar{\mathsf{A}}_{s,n}$ is given by generators $\bar{t}_{\alpha b}$ with $\alpha=-1,\ldots,-s,  b\in I_{n|n}$ and the relation
\begin{equation}
\bar{T}_-^{1[3]}\bar{T}_-^{2[3]}R_-^{12}=S^{12}\bar{T}_-^{2[3]}\bar{T}_-^{1[3]},\label{eq:DArlnsM}
\end{equation}
where $\bar{T}_-=\sum\limits_{\alpha=-s}^{-1}\sum\limits_{b\in I_{n|n}}E_{b \alpha}\otimes \bar{t}_{\alpha b}$, 
and 
\begin{equation}
R_-=\sum\limits_{i,j=-s}^{-1}q^{-\delta_{ij}}E_{ii}\otimes E_{jj}-\xi \sum\limits_{-s\leqslant j<i\leqslant -1}E_{ji}\otimes E_{ij}
\label{eq:rnegative}
\end{equation} 
is the submatix of $S$ involving the terms $E_{ik}\otimes E_{jl}$ with $-s\leqslant i, j,k,l\leqslant -1$.
\end{lemma}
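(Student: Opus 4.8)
The plan is to show that the relations \eqref{eq:DQCA1}--\eqref{eq:DQCA2} in the original presentation are equivalent to the single relation \eqref{eq:DArlnsM} together with the fact that the negative-index generators $\bar{t}_{\alpha b}$ with $\alpha=-1,\ldots,-s$ and $b\in I_{n|n}$ already generate everything. First I would use \eqref{eq:DQCA1} to eliminate half of the generators: the relation $\bar{t}_{\alpha b}=(-1)^{|\alpha|+|b|}\bar{t}_{-\alpha,-b}$ shows that every $\bar{t}_{\alpha b}$ with $\alpha>0$ equals, up to sign, one with $\alpha<0$, so $\bar{\mathsf{A}}_{s,n}$ is generated by $\bar{t}_{\alpha b}$, $\alpha=-1,\ldots,-s$, $b\in I_{n|n}$. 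This is the analogue of the passage from \eqref{eq:QCA1}--\eqref{eq:QCA2} to \eqref{eq:ArlnsM} carried out in \cite[Lemma~3.2]{CW23}, so I would mirror that argument. The substitution $\bar{t}_{\alpha,b}\mapsto (-1)^{|\alpha|+|b|}\bar{t}_{-\alpha,-b}$ corresponds at the level of matrices to conjugating the index-$\alpha$ (second-factor) slot of $\bar T$ by $J$; concretely one checks that $\bar T = \bar T_-' (J\otimes 1)$-type bookkeeping lets one rewrite $\bar T^{1[3]}\bar T^{2[3]}S^{12}=S^{12}\bar T^{2[3]}\bar T^{1[3]}$ as a relation purely among the blocks with indices in $\{-1,\dots,-s\}$ on the $\mathrm{End}$-factors.

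Next I would compute which entries of the matrix identity \eqref{eq:DQCA2} survive when the first two tensor legs are restricted to coordinates $-s\le i,j,k,l\le -1$. Exactly as the submatrix $R_+$ in \eqref{eq:rmatrix} is the part of $S$ supported on $\{1,\dots,r\}^2$, the submatrix of $S$ supported on $\{-s,\dots,-1\}^2$ is precisely the matrix $R_-$ in \eqref{eq:rnegative}: from \eqref{eq:smatrix}, the diagonal contributes $q^{\varphi(i,j)}=q^{-\delta_{ij}-\delta_{i,-j}}$, and for $i,j\in\{-s,\dots,-1\}$ one has $\delta_{i,-j}=0$ and $(-1)^{|i|}=-1$, giving the $q^{-\delta_{ij}}$ coefficients and the $-\xi$ off-diagonal coefficients shown in \eqref{eq:rnegative}. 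So restricting the "outer" factor indices in \eqref{eq:DQCA2} to negative values turns the left occurrence of $S^{12}$ into $R_-^{12}$ placed on the appropriate side, yielding \eqref{eq:DArlnsM}; conversely, the full relation \eqref{eq:DQCA2} for all index ranges is recovered from \eqref{eq:DArlnsM} together with \eqref{eq:DQCA1}, because once the negative-index relations hold, the relations involving a positive index are obtained from them by the sign substitution dictated by \eqref{eq:DQCA1}, using $(J\otimes1)S=S(J\otimes1)$ and $S_J=(1\otimes J)S(1\otimes J)$ as in the proof of Proposition~\ref{prop:isoApi}.

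Having the two presentations define superalgebras with surjections onto each other, I would finish by a dimension/basis count: by \cite[Proposition~3.6.4]{BDK20} (or the dual statement in \cite[Section~2]{CW23}) the ordered monomials in $\bar t_{\alpha b}$ with $\alpha=-1,\dots,-s$, $b\in I_{n|n}$ (exponents unrestricted for $|b|=\bar 0$, in $\{0,1\}$ for $|b|=\bar 1$) form a basis of $\bar{\mathsf A}_{s,n}$, and these same monomials span the superalgebra defined by \eqref{eq:DArlnsM}; a PBW-type straightening argument using \eqref{eq:DArlnsM} shows they span, so the natural surjection from the second presentation to $\bar{\mathsf A}_{s,n}$ is also injective, hence an isomorphism. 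The main obstacle I anticipate is purely bookkeeping: tracking the parity signs $(-1)^{|b|(|\alpha|+|b|)}$ in the definition of $\bar T$ together with the $J$-conjugations, so that the off-diagonal sign in $R_-$ comes out as $-\xi$ rather than $+\xi$ and the $S^{12}$ ends up on the correct side of each product in \eqref{eq:DArlnsM}; once the matrix identities are set up correctly, everything reduces to the quantum Yang--Baxter equation for $S$ and the already-established commutation relations $(J\otimes1)S=S(J\otimes1)$.
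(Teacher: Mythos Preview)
Your outline has the right skeleton but differs from the paper's proof in method, and one of your steps hides the real work.

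The paper's proof is entirely componentwise. It writes \eqref{eq:DQCA2} explicitly as the scalar relation
\[
q^{\varphi(\alpha,\beta)}\bar t_{\alpha a}\bar t_{\beta b}-(-1)^{(|\alpha|+|a|)(|\beta|+|b|)}q^{\varphi(a,b)}\bar t_{\beta b}\bar t_{\alpha a}=\cdots
\]
for all $\alpha,\beta\in I_{s|s}$, and \eqref{eq:DArlnsM} as the corresponding scalar relation for $\alpha,\beta\in\{-1,\dots,-s\}$. One direction is then trivial (specialize $\alpha,\beta$ to negative values). For the converse, the paper substitutes $\bar t_{\alpha a}=(-1)^{|a|}\bar t_{-\alpha,-a}$ and checks by hand, in four separate cases depending on the signs of $\alpha$ and $\beta$, that the negative-index relation reproduces the full relation. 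There is no matrix-level $J$-conjugation and no basis or dimension argument; the proof is a direct verification that the two families of quadratic relations coincide once \eqref{eq:DQCA1} is imposed.

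Your matrix-level approach via $J$-conjugation is a reasonable alternative, but the sentence ``the relations involving a positive index are obtained from them by the sign substitution dictated by \eqref{eq:DQCA1}, using $(J\otimes1)S=S(J\otimes1)$ and $S_J=(1\otimes J)S(1\otimes J)$'' is exactly where the content lies, and it is not clear that it goes through as cleanly as you suggest. The restriction of $\bar T^{1[3]}\bar T^{2[3]}S^{12}$ to negative column indices is \emph{not} block-clean: the off-diagonal terms $E_{-j,-i}\otimes E_{ij}$ in $S$ couple the negative and positive blocks, so even with $\alpha,\alpha'<0$ the left-hand side picks up contributions from positive-index $\bar t$'s that must then be rewritten via \eqref{eq:DQCA1}. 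Sorting this out at the matrix level essentially reproduces the paper's four-case computation; your identities $(J\otimes 1)S=S(J\otimes 1)$ and the definition of $S_J$ by themselves do not suffice without that bookkeeping. Finally, the PBW/basis count in your last paragraph is superfluous: once the two sets of relations are shown to be equivalent, the presentations are identical and no injectivity argument is needed.
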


\begin{proof}
The relation \eqref{eq:DQCA2} is equivalent to
\begin{equation}
\begin{aligned}
&q^{\varphi(\alpha,\beta)}\bar{t}_{\alpha a}\bar{t}_{\beta b}-(-1)^{(|\alpha|+|a|)(|\beta|+|b|)}q^{\varphi(a,b)}\bar{t}_{\beta b}\bar{t}_{\alpha a}\\
=&\theta(\alpha,\beta,b)\xi\left(\delta_{b<a}-\delta_{\alpha<\beta}\right)\bar{t}_{\beta a}\bar{t}_{\alpha b}
+\theta(\alpha,\beta,b)(-1)^{|a|+|\beta|}\xi\left(\delta_{-\alpha<\beta}-\delta_{b<-a}\right)\bar{t}_{\beta,-a}\bar{t}_{\alpha,-b},
\end{aligned}
\label{eq:barT}
\end{equation}
for $\alpha,\beta\in I_{s|s}$ and $a,b\in I_{n|n}$. While the relation \eqref{eq:DArlnsM} is equivalent to
\begin{equation}
\begin{aligned}
&q^{-\delta_{\alpha\beta}}\bar{t}_{\alpha a}\bar{t}_{\beta b}-(-1)^{(|a|+1)(|b|+1)}q^{\varphi(a,b)}\bar{t}_{\beta b}\bar{t}_{\alpha a}\\
=&\xi\left(\delta_{\alpha<\beta}-\delta_{b<a}\right)\bar{t}_{\beta a}\bar{t}_{\alpha b}-(-1)^{|a|}\xi\delta_{b<-a}\bar{t}_{\beta,-a}\bar{t}_{\alpha,-b},
\end{aligned}\label{eq:barTpp}
\end{equation}
for $\alpha,\beta=-1,\ldots,-s$ and $a,b\in I_{n|n}$. We verify that \eqref{eq:barT} is equivalent to \eqref{eq:barTpp} provided that $\bar{t}_{\alpha, b}=(-1)^{|\alpha|+|b|}\bar{t}_{-\alpha,-b}$ for $\alpha\in I_{s|s}$ and $b\in I_{n|n}$.

The relation \eqref{eq:barTpp} is the special case of \eqref{eq:barT} where $\alpha,\beta=-1,\ldots, -s$. 
Conversely, it suffices to show the relation \eqref{eq:barTpp} also implies \eqref{eq:barT}. We consider the following four cases separately:

\textbf{Case1:} $\alpha<0, \beta<0$. In this situation, the relations \eqref{eq:barTpp} and \eqref{eq:barT} are same.

\textbf{Case2:} $\alpha>0, \beta<0$. We deduce from (\ref{eq:barTpp}) that
\begin{align*}
&q^{-\delta_{-\alpha,\beta}}\bar{t}_{-\alpha,-a}\bar{t}_{\beta b}-(-1)^{|a||b|+|a|}q^{\varphi(-a,b)}\bar{t}_{\beta b}\bar{t}_{-\alpha,-a}\\
=&\xi\left(\delta_{-\alpha<\beta}-\delta_{b<-a}\right)\bar{t}_{\beta,-a}\bar{t}_{-\alpha,b}
+(-1)^{|a|}\xi\delta_{b<a}\bar{t}_{\beta a}\bar{t}_{-\alpha,-b}.
\end{align*}
Since $\bar{t}_{-\alpha,-a}=(-1)^{|a|}\bar{t}_{\alpha a}$, we obtain that
\begin{align*}
&q^{-\delta_{-\alpha,\beta}}\bar{t}_{\alpha a}\bar{t}_{\beta b}-(-1)^{|a||b|+|a|}q^{\varphi(-a,b)}\bar{t}_{\beta b}\bar{t}_{\alpha a}\\
=&-(-1)^{|a|+|b|}\xi\left(\delta_{-\alpha<\beta}-\delta_{b<-a}\right)\bar{t}_{\beta,-a}\bar{t}_{\alpha,-b}
+(-1)^{|b|}\xi\delta_{b<a}\bar{t}_{\beta a}\bar{t}_{\alpha b},
\end{align*}
which coincides with (\ref{eq:barT}) in the situation of $\alpha>0$ and $\beta<0$.

\textbf{Case 3:} $\alpha<0$, $\beta>0$. We deduce from (\ref{eq:barTpp}) that
\begin{align*}
&q^{-\delta_{\alpha,-\beta}}\bar{t}_{\alpha a}\bar{t}_{-\beta, -b}-(-1)^{|a||b|+|b|}q^{\varphi(a,-b)}\bar{t}_{-\beta,-b}\bar{t}_{\alpha a}\\
=&\xi\left(\delta_{\alpha<-\beta}-\delta_{-b<a}\right)\bar{t}_{-\beta,a}\bar{t}_{\alpha,-b}
-(-1)^{|a|}\xi\delta_{-b<-a}\bar{t}_{-\beta,-a}\bar{t}_{\alpha b}.
\end{align*}
Then $\bar{t}_{-\beta,-b}=(-1)^{|b|}\bar{t}_{\beta b}$ implies that 
\begin{align*}
&q^{-\delta_{\alpha,-\beta}}\bar{t}_{\alpha a}\bar{t}_{\beta b}-(-1)^{|a||b|+|b|}q^{\varphi(a,-b)}\bar{t}_{\beta b}\bar{t}_{\alpha a}\\
=&-(-1)^{|a|+|b|}\xi\left(\delta_{\alpha<-\beta}-\delta_{-b<a}\right)\bar{t}_{\beta,-a}\bar{t}_{\alpha,-b}
-(-1)^{|b|}\xi\delta_{a<b}\bar{t}_{\beta a}\bar{t}_{\alpha b}.
\end{align*}
Hence,
\begin{align*}
&q^{\varphi(\alpha,\beta)}\bar{t}_{\alpha a}\bar{t}_{\beta b}-(-1)^{|a||b|+|b|}q^{\varphi(a,b)}\bar{t}_{\beta b}\bar{t}_{\alpha a}\\
=&\left(q^{-\delta_{\alpha,-\beta}}+\delta_{\alpha,-\beta}\xi\right)\bar{t}_{\alpha a}\bar{t}_{\beta b}-(-1)^{|a||b|+|b|}\left(q^{-\varphi(a,b)}+(-1)^{|b|}\left(\delta_{ab}+\delta_{a,-b}\right)\xi\right)\bar{t}_{\beta b}\bar{t}_{\alpha a}\\
=&-(-1)^{|a|+|b|}\xi\left(\delta_{\alpha<-\beta}-\delta_{-b<a}\right)\bar{t}_{\beta,-a}\bar{t}_{\alpha,-b}
-(-1)^{|b|}\xi\delta_{a<b}\bar{t}_{\beta a}\bar{t}_{\alpha b}\\
&+\delta_{\alpha,-\beta}\xi \bar{t}_{\alpha a}\bar{t}_{\beta b}-(-1)^{|a||b|}\left(\delta_{ab}+\delta_{a,-b}\right)\xi \bar{t}_{\beta b}\bar{t}_{\alpha a}\\
=&-(-1)^{|b|}\xi\left(\delta_{a<b}+\delta_{ab}\right)\bar{t}_{\beta a}\bar{t}_{\alpha b}-(-1)^{|a|+|b|}\xi\left(\delta_{\alpha<-\beta}-\delta_{-b<a}+\delta_{\alpha,-\beta}-\delta_{a,-b}\right)\bar{t}_{\beta,-a}\bar{t}_{\alpha,-b}\\
=&(-1)^{|b|}\xi \left(\delta_{b<a}-1\right)\bar{t}_{\beta a}\bar{t}_{\alpha b}+(-1)^{|a|+|b|}\xi(\delta_{-\alpha<\beta}-\delta_{b<-a})\bar{t}_{\beta,-a}\bar{t}_{\alpha,-b},
\end{align*}
which yields with (\ref{eq:barT}) in the situation of $\alpha<0$ and $\beta>0$.

\textbf{Case 4:} $\alpha>0$, $\beta>0$. We deduce from (\ref{eq:barTpp}) that
\begin{align*}
&q^{-\delta_{\alpha,\beta}}\bar{t}_{-\alpha,-a}\bar{t}_{-\beta, -b}-(-1)^{|a||b|}q^{\varphi(-a,-b)}\bar{t}_{-\beta,-b}\bar{t}_{-\alpha,-a}\\
=&\xi\left(\delta_{\beta<\alpha}-\delta_{a<b}\right)\bar{t}_{-\beta,-a}\bar{t}_{-\alpha,-b}
+(-1)^{|a|}\xi\delta_{-b<a}\bar{t}_{-\beta,a}\bar{t}_{-\alpha,b}.
\end{align*}
It follows from $\bar{t}_{-\beta,-a}=(-1)^{|a|}\bar{t}_{\beta a}$ that
\begin{align*}
q^{-\delta_{\alpha\beta}}\bar{t}_{\alpha a}\bar{t}_{\beta b}-(-1)^{|a||b|}q^{\varphi(-a,-b)}\bar{t}_{\beta b}\bar{t}_{\alpha a}
=\xi\left(\delta_{\beta<\alpha}-\delta_{a<b}\right)\bar{t}_{\beta a}\bar{t}_{\alpha b}
+(-1)^{|a|}\xi\delta_{-b<a}\bar{t}_{\beta,-a}\bar{t}_{\alpha,-b}.
\end{align*}
Hence,
\begin{align*}
&q^{\varphi(\alpha,\beta)}\bar{t}_{\alpha a}\bar{t}_{\beta b}-(-1)^{|a||b|}q^{\varphi(a,b)}\bar{t}_{\beta b}\bar{t}_{\alpha a}\\
=&\left(q^{-\delta_{\alpha\beta}}+\delta_{\alpha\beta}\xi\right)\bar{t}_{\alpha a}\bar{t}_{\beta b}
-(-1)^{|a||b|}\left(q^{\varphi(-a,-b)}+(-1)^{|b|}\left(\delta_{ab}+\delta_{a,-b}\right)\xi\right)\bar{t}_{\beta b}\bar{t}_{\alpha a}\\
=&\xi\left(\delta_{\beta<\alpha}-\delta_{a<b}\right)\bar{t}_{\beta a}\bar{t}_{\alpha b}
+(-1)^{|a|}\xi\delta_{-b<a}\bar{t}_{\beta,-a}\bar{t}_{\alpha,-b}\\
&+\delta_{\alpha\beta}\xi \bar{t}_{\alpha a}\bar{t}_{\beta b}
-(-1)^{|a||b|+|b|}\left(\delta_{ab}+\delta_{a,-b}\right)\xi \bar{t}_{\beta b}\bar{t}_{\alpha a}\\
=&\xi\left(\delta_{\beta<\alpha}-\delta_{a<b}+\delta_{\alpha\beta}-\delta_{ab}\right)\bar{t}_{\beta a}\bar{t}_{\alpha b}
+(-1)^{|a|}\xi\left(\delta_{-b<a}+\delta_{-b,a}\right)\bar{t}_{\beta,-a}\bar{t}_{\alpha,-b}\\
=&\xi(\delta_{b<a}-\delta_{\alpha<\beta})\bar{t}_{\beta a}\bar{t}_{\alpha b}+(-1)^{|a|}\xi \left(1-\delta_{b<-a}\right)\bar{t}_{\beta,-a}\bar{t}_{\alpha,-b},
\end{align*}
which yields with \eqref{eq:barT} in the situation of $\alpha>0$ and $\beta>0$. This completes the proof.
\end{proof}

\begin{remark}
The dual quantum coordinate superalgebra $\bar{\mathsf{A}}_{s,n}$ is also presented by generators $\bar{t}_{\alpha a}$ for $\alpha=1,\ldots,s$ and $a\in I_{n|n}$ and the relation
\begin{equation*}
\bar{T}_+^{1[3]}\bar{T}_+^{2[3]}R^{12}_J=S_J^{12}\bar{T}_+^{2[3]}\bar{T}_+^{1[3]},
\end{equation*}
where $\bar{T}_+=\sum\limits_{i=1}^{s}\sum\limits_{b\in I_{n|n}}(-1)^{|b|}E_{bi}\otimes \bar{t}_{i b}$, $S_J$ is the tensor matrix given in \eqref{eq:stilde}, and 
\begin{equation*}
R_J=-\sum\limits_{i,j=1}^sq^{-\delta_{ij}}E_{ii}\otimes E_{jj}+\xi \sum\limits_{1\leqslant j<i\leqslant s}E_{ji}\otimes E_{ij}
\end{equation*} 
is the submatix of $S_J$ involving the terms $E_{ik}\otimes E_{jl}$ with $1\leqslant i, j,k,l\leqslant s$.
\end{remark}

The superalgebra $\bar{\mathsf{A}}_{s,n}$ is also a $\Uq(\mathfrak{q}_n)$-supermodule superalgebra under the action $\bar{\Phi}$ given in terms of generator matrices as
\begin{equation}
L^{[2]3}\underset{\bar{\Phi}}{\cdot}\bar{T}^{1[2]}=\left(S^{-1}\right)^{13}\bar{T}^{1[2]},
\end{equation}
where 
\begin{equation}
S^{-1}=\sum\limits_{i,j\in I_{r|r}}q^{-\varphi(i,j)}E_{ii}\otimes E_{jj}-\xi \sum\limits_{i,j\in I_{r|r}, i<j }(-1)^{|i|}(E_{ji}+E_{-j,-i})\otimes E_{ij}\label{eq:sinverse}
\end{equation}
is the inverse of the tensor matrix $S$ given in \eqref{eq:smatrix}.

\begin{lemma}
	\label{lem:dembed}
For each $0\leq p\leq l$, there is an injective homomorphism of $\Uq(\mathfrak{q}_n)$-supermodule superalgebras
$$\bar{\iota}_p: \bar{\mathsf{A}}_{s,n}\rightarrow \bar{\mathsf{A}}_{s+l,n},\quad
\bar{t}_{\alpha a}\mapsto \bar{t}_{\alpha-p,a},\quad \alpha=-1,\ldots,-l,\quad a\in I_{n|n}.$$
\end{lemma}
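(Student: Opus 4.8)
The plan is to mirror the proof of Lemma~\ref{lem:embed} for the dual quantum coordinate superalgebras, using the alternative presentation from Lemma~\ref{lem:DQCAP2}. First I would note that by Lemma~\ref{lem:DQCAP2} the superalgebra $\bar{\mathsf{A}}_{s,n}$ is presented by generators $\bar{t}_{\alpha b}$ with $\alpha=-1,\ldots,-s$ and $b\in I_{n|n}$, subject to the relation \eqref{eq:barTpp} (equivalently \eqref{eq:DArlnsM}). So the assignment $\bar{t}_{\alpha a}\mapsto\bar{t}_{\alpha-p,a}$ extends to a homomorphism from the free unital associative superalgebra on the $\bar{t}_{\alpha a}$, $\alpha=-1,\ldots,-s$, into $\bar{\mathsf{A}}_{s+l,n}$, and the point is to check that the image satisfies the defining relations of $\bar{\mathsf{A}}_{s,n}$.

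The key computational step is to verify that the relation \eqref{eq:barTpp} is preserved under the shift $\alpha\mapsto\alpha-p$, $\beta\mapsto\beta-p$. Since $-s\leq\beta<\alpha\leq-1$ implies $-(s+l)\leq\beta-p<\alpha-p\leq-1$, and the only place the indices $\alpha,\beta$ enter \eqref{eq:barTpp} is through $\delta_{\alpha\beta}$ and $\delta_{\alpha<\beta}$, which are invariant under a simultaneous shift, the relation for $\bar{t}_{\alpha-p,a},\bar{t}_{\beta-p,b}$ in $\bar{\mathsf{A}}_{s+l,n}$ reads exactly the same as the relation for $\bar{t}_{\alpha a},\bar{t}_{\beta b}$ in $\bar{\mathsf{A}}_{s,n}$ (the $S$-matrix part of \eqref{eq:DArlnsM} only involves the $I_{n|n}$-indices and is untouched). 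Hence $\bar{\iota}_p$ is a well-defined homomorphism of associative superalgebras. Compatibility with the $\Uq(\mathfrak{q}_n)$-action follows because the action $\bar\Phi$ on both $\bar{\mathsf{A}}_{s,n}$ and $\bar{\mathsf{A}}_{s+l,n}$ is given by $L^{[2]3}\underset{\bar\Phi}{\cdot}\bar T^{1[2]}=(S^{-1})^{13}\bar T^{1[2]}$, which acts only on the $I_{n|n}$-index $b$ of $\bar t_{\alpha b}$ and is therefore intertwined by the index shift on $\alpha$; so $\bar\iota_p$ is a homomorphism of $\Uq(\mathfrak{q}_n)$-supermodules.

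For injectivity I would invoke the PBW-type basis for $\bar{\mathsf{A}}_{s,n}$ analogous to \cite[Proposition~3.6.4]{BDK20}: the ordered monomials $\prod\bar t_{\alpha a}^{d_{\alpha a}}$ ($\alpha=-1,\ldots,-s$, $a\in I_{n|n}$, with $d_{\alpha a}\in\{0,1\}$ when $|a|+\bar 1$, i.e.\ the generator, is odd and $d_{\alpha a}\geq 0$ otherwise, product taken in lexicographic order) form a $\mathbb{C}(q)$-basis. Since $\bar\iota_p$ sends this basis to a subset of the analogous basis of $\bar{\mathsf{A}}_{s+l,n}$ (the indices $\alpha-p$ for $\alpha=-1,\ldots,-s$ are distinct and the ordering is preserved by the shift), $\bar\iota_p$ is injective. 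The main obstacle, such as it is, is purely bookkeeping: one must make sure the sign conventions in the relation \eqref{eq:DArlnsM}/\eqref{eq:barTpp} and in the generator matrix $\bar T_-=\sum E_{b\alpha}\otimes\bar t_{\alpha b}$ behave correctly under the shift, i.e.\ that the submatrix $R_-$ of $S$ on the indices $\{-s,\ldots,-1\}$ maps to the submatrix on $\{-(s+l),\ldots,-(l+1)\}$ under $\alpha\mapsto\alpha-p$ — which holds because $R_-$ in \eqref{eq:rnegative} depends on its indices only through $\delta_{ij}$ and the order relation $j<i$, both shift-invariant. Once this is recorded there is nothing further to do, and the lemma follows exactly as Lemma~\ref{lem:embed} did.
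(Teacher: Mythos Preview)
Your proposal is correct and is exactly the argument the paper has in mind: the paper states Lemma~\ref{lem:dembed} without proof, relying implicitly on the analogy with Lemma~\ref{lem:embed}, and your write-up carries out that analogy in full using the presentation of Lemma~\ref{lem:DQCAP2}. There is nothing to add.
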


An explicit braiding operator $\bar{\Theta}: \bar{\mathsf{A}}_{l,n}\otimes\bar{\mathsf{A}}_{s,n}
	\rightarrow\bar{\mathsf{A}}_{s,n}\otimes\bar{\mathsf{A}}_{l,n}$ can be defined as follows.

\begin{proposition}\label{prop:interbara}
	There is a unique homomorphism of $\Uq(\mathfrak{q}_n)$-supermodules
	$$\bar{\Theta}:\bar{\mathsf{A}}_{l,n}\otimes\bar{\mathsf{A}}_{s,n}
	\rightarrow\bar{\mathsf{A}}_{s,n}\otimes\bar{\mathsf{A}}_{l,n}$$
	satisfying the commutative diagrams \eqref{eq:intwass1},\eqref{eq:intwass2}, and such that 
	\begin{align}
	\bar{\Theta}\left(1\otimes \bar{x}\right)=&\bar{x}\otimes1,\quad\bar{x}\in\bar{\mathsf{A}}_{s,n},\label{eq:dtheta1a}\\
    \bar{\Theta}\left(\bar{y}\otimes1\right)=&1\otimes \bar{y}\quad \bar{y}\in\bar{\mathsf{A}}_{l,n},\label{eq:dtheta1b}\\		
	\bar{\Theta}\left(\bar{T}_l^{1[3]}\bar{T}_s^{2[4]}\right)
		=&S^{12}\bar{T}_s^{2[3]}\bar{T}_l^{1[4]},\label{eq:dtheta2}
	\end{align}
where $\bar{T}_l=\sum\limits_{\beta=-l}^{-1}\sum\limits_{b\in I_{n|n}}E_{b\beta}\otimes \bar{t}_{\beta b}$  and $\bar{T}_s=\sum\limits_{\alpha=-s}^{-1}\sum\limits_{a\in I_{n|n}}E_{a\alpha}\otimes \bar{t}_{\alpha a}$ are the generator matrices of $\bar{\mathsf{A}}_{l,n}$ and $\bar{\mathsf{A}}_{s,n}$, respectively.
\end{proposition}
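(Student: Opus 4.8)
The plan is to mirror, almost verbatim, the construction of $\Theta$ in Proposition~\ref{prop:interwa}, replacing the quantum coordinate superalgebras and their defining relations by their dual analogues. First I would let $\bar W_s$ (resp. $\bar W_l$) be the $\mathbb{C}(q)$-sub-superspace of $\bar{\mathsf{A}}_{s,n}$ (resp. $\bar{\mathsf{A}}_{l,n}$) spanned by $1$ and the generators $\bar t_{\alpha a}$, $\alpha=-1,\dots,-s$ (resp. $\alpha=-1,\dots,-l$), $a\in I_{n|n}$. The formulas \eqref{eq:dtheta1a}, \eqref{eq:dtheta1b}, \eqref{eq:dtheta2} specify a $\mathbb{C}(q)$-linear map $\bar W_l\otimes\bar W_s\to\bar W_s\otimes\bar W_l$; expanding \eqref{eq:dtheta2} in generators (using $\bar T_l=\sum E_{b\beta}\otimes\bar t_{\beta b}$, $\bar T_s=\sum E_{a\alpha}\otimes\bar t_{\alpha a}$ and the matrix $S$ from \eqref{eq:smatrix}) gives an explicit expression of $\bar\Theta(\bar t_{\beta b}\otimes\bar t_{\alpha a})$ as a linear combination of terms $\bar t_{\alpha' a'}\otimes\bar t_{\beta' b'}$. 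This extends uniquely to a linear map $\widetilde{\bar\Theta}:\bar{\mathsf{F}}_{l,n}\otimes\bar{\mathsf{F}}_{s,n}\to\bar{\mathsf{A}}_{s,n}\otimes\bar{\mathsf{A}}_{l,n}$ from the free superalgebras forced to satisfy the hexagon diagrams \eqref{eq:intwass1} and \eqref{eq:intwass2}, exactly as in the proof of Proposition~\ref{prop:interwa}.

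The main work is to check that $\widetilde{\bar\Theta}$ descends through the defining relations of $\bar{\mathsf{A}}_{s,n}$ and $\bar{\mathsf{A}}_{l,n}$, which by Lemma~\ref{lem:DQCAP2} are $\bar T_-^{1[3]}\bar T_-^{2[3]}R_-^{12}=S^{12}\bar T_-^{2[3]}\bar T_-^{1[3]}$. Using the diagram \eqref{eq:intwass1} I would compute $\widetilde{\bar\Theta}\bigl(\bar T_l^{1[4]}\bar T_s^{2[5]}\bar T_s^{3[5]}\bigr)$ by splitting the last two factors apart, applying $\widetilde{\bar\Theta}\otimes 1$ then $1\otimes\widetilde{\bar\Theta}$ via \eqref{eq:dtheta2}, and re-multiplying; the parallel computation with the $2,3$ labels exchanged produces the analogous expression with two $S$-factors swapped. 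Sandwiching with $R_-^{23}$ on the left and $S^{23}$ on the right and invoking the quantum Yang--Baxter equation $S^{12}S^{13}S^{23}=S^{23}S^{13}S^{12}$ for the relevant triples, one gets $\widetilde{\bar\Theta}\bigl(\bar T_l^{1[4]}\bar T_s^{2[5]}\bar T_s^{3[5]}R_-^{23}\bigr)=\widetilde{\bar\Theta}\bigl(S^{23}\bar T_s^{3[5]}\bar T_s^{2[5]}\bar T_l^{1[4]}\bigr)$, i.e. $\widetilde{\bar\Theta}$ respects the relation \eqref{eq:DArlnsM} in the second tensor slot. The symmetric computation with diagram \eqref{eq:intwass2} handles the relation of $\bar{\mathsf{A}}_{l,n}$ in the first slot. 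Hence $\widetilde{\bar\Theta}$ factors through a well-defined map $\bar\Theta:\bar{\mathsf{A}}_{l,n}\otimes\bar{\mathsf{A}}_{s,n}\to\bar{\mathsf{A}}_{s,n}\otimes\bar{\mathsf{A}}_{l,n}$, and uniqueness is immediate since $\bar\Theta(\bar y\otimes\bar x)$ is determined by \eqref{eq:dtheta1a}--\eqref{eq:dtheta2} on generators.

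Finally I would verify $\Uq(\mathfrak{q}_n)$-equivariance of $\bar\Theta$. Since the action $\bar\Phi$ on a dual quantum coordinate superalgebra is given by $L^{[2]3}\underset{\bar\Phi}{\cdot}\bar T^{1[2]}=(S^{-1})^{13}\bar T^{1[2]}$, and $\Uq(\mathfrak{q}_n)$ acts on both tensor products through $\Delta(L^{[1]2})=L^{[1]2}L^{[1']2}$, I would compute $\bar\Theta\bigl(L^{[3]4}\underset{\bar\Phi}{\cdot}(\bar T_l^{1[3]}\bar T_s^{2[3']})\bigr)$ and $L^{[3]4}\underset{\bar\Phi}{\cdot}\bar\Theta(\bar T_l^{1[3]}\bar T_s^{2[3']})$ separately; each reduces via \eqref{eq:dtheta2} and the $\bar\Phi$-action formula to an expression of the form $S^{12}\bar T_s^{2[3]}\bar T_l^{1[4]}$ postmultiplied by a product of three $S^{-1}$-matrices, and the two results agree once one uses that $S^{-1}$ also satisfies the quantum Yang--Baxter equation together with the commutation $(S^{-1})^{14}(S^{-1})^{24}S^{12}=S^{12}(S^{-1})^{14}(S^{-1})^{24}$ that follows from it. I expect the main obstacle to be purely bookkeeping: keeping the tensor-leg indices $1,1',2,3,\dots$ straight through the repeated applications of the hexagon diagrams and making sure the sign and $q$-power conventions built into $\bar T$ (the extra factor $(-1)^{|b|(|\alpha|+|b|)}$) are carried correctly, since the dual relations \eqref{eq:DQCA2} and \eqref{eq:DArlnsM} place $S$ on the opposite side compared with \eqref{eq:QCA2} and \eqref{eq:ArlnsM}. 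No genuinely new idea beyond the quantum Yang--Baxter equation and its $S^{-1}$-version should be needed.
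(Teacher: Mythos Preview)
Your proposal is correct and is exactly the approach the paper takes: the paper's proof consists of the single sentence ``The proposition follows from the same technique of the proposition~\ref{prop:interwa},'' and you have spelled out precisely that parallel argument, with the expected adjustments (the relation~\eqref{eq:DArlnsM} has $R_-$ on the right and $S$ on the left, and the $\Uq(\mathfrak{q}_n)$-action $\bar\Phi$ multiplies by $(S^{-1})^{13}$ on the left). Aside from a couple of harmless slips in the index bookkeeping---e.g.\ the identity needed for equivariance is $(S^{-1})^{14}(S^{-1})^{24}S^{12}=S^{12}(S^{-1})^{24}(S^{-1})^{14}$ rather than with the last two factors in the same order---your outline is complete.
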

\begin{proof}
The proposition follows from the same technique of the proposition \ref{prop:interwa}.
\end{proof}

The braiding operator $\bar{\Theta}$ yields a braided multiplication on $\bar{\mathsf{A}}_{s,n}\otimes\bar{\mathsf{A}}_{l,n}$:
$$\bar{\mathsf{A}}_{s,n}\otimes\bar{\mathsf{A}}_{l,n}
\otimes\bar{\mathsf{A}}_{s,n}\otimes\bar{\mathsf{A}}_{l,n}
\xrightarrow{1\otimes\bar{\Theta}\otimes1}\bar{\mathsf{A}}_{s,n}\otimes\bar{\mathsf{A}}_{s,n}\otimes\bar{\mathsf{A}}_{l,n}\otimes\bar{\mathsf{A}}_{l,n}\xrightarrow{\mathrm{mul}\otimes\mathrm{mul}}
\bar{\mathsf{A}}_{s,n}\otimes\bar{\mathsf{A}}_{l,n},
$$
under which the braided tensor product $\bar{\mathsf{A}}_{s,n}\otimes\bar{\mathsf{A}}_{l,n}$ is a $\Uq(\mathfrak{q}_n)$-supermodule superalgebra. 
In particular,
\begin{align*}
	\left(1\otimes \bar{t}_{\beta b}\right)\left(\bar{t}_{\alpha a}\otimes1\right)
	=&(-1)^{(|a|+1)(|b|+1)}q^{\varphi(b,a)}\bar{t}_{\alpha a}\otimes \bar{t}_{\beta b}
	-\delta_{a<b}\xi \bar{t}_{\alpha b}\otimes \bar{t}_{\beta a}\\
	&-(-1)^{|b|}\delta_{a<-b}\xi\bar{t}_{\alpha,-b}\otimes \bar{t}_{\beta,-a},
\end{align*}
for $\alpha=-1,\ldots,-s, \beta=-1,\ldots,-l$ and $a,b\in I_{n|n}$.

\begin{theorem}\label{thm:isodqcsa}
The $\mathbb{C}(q)$-linear map
\begin{equation*}
\bar{\sigma}:\bar{\mathsf{A}}_{s,n}\otimes\bar{\mathsf{A}}_{l,n}
\rightarrow\bar{\mathsf{A}}_{s+l,n}, \quad x\otimes y\mapsto \bar{\iota}_l(x)\bar{\iota}_0(y),\quad x\in\bar{\mathsf{A}}_{s,n},\quad y\in\bar{\mathsf{A}}_{l,n},
\end{equation*}
is an isomorphism of\, $\Uq(\mathfrak{q}_n)$-supermodule superalgebras, where $\bar{\iota}_l:\bar{\mathsf{A}}_{s,n}\mapsto\bar{\mathsf{A}}_{s+l,n}$ and $\bar{\iota}_0:\bar{\mathsf{A}}_{l,n}\mapsto\bar{\mathsf{A}}_{s+l,n}$ are the maps given in Lemma~\ref{lem:dembed}
\end{theorem}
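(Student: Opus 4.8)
The plan is to carry over the proof of Theorem~\ref{thm:isoA} essentially verbatim, with $\mathsf{A}_{r,n},\mathsf{A}_{k,n},\mathsf{A}_{r+k,n},\iota_k,\iota_0,\Theta$ replaced throughout by $\bar{\mathsf{A}}_{s,n},\bar{\mathsf{A}}_{l,n},\bar{\mathsf{A}}_{s+l,n},\bar{\iota}_l,\bar{\iota}_0,\bar{\Theta}$. The one new ingredient to establish first is the dual analogue of Lemma~\ref{lem:braidmul}: if $\bar x\in\bar{\mathsf{A}}_{s,n}$, $\bar y\in\bar{\mathsf{A}}_{l,n}$ and $\bar\Theta(\bar y\otimes\bar x)=\sum_i\bar x_i\otimes\bar y_i$ with $\bar x_i\in\bar{\mathsf{A}}_{s,n}$, $\bar y_i\in\bar{\mathsf{A}}_{l,n}$, then $\bar\iota_0(\bar y)\bar\iota_l(\bar x)=\sum_i\bar\iota_l(\bar x_i)\bar\iota_0(\bar y_i)$ in $\bar{\mathsf{A}}_{s+l,n}$. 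Both $\bar{\mathsf{A}}_{s,n}$ and $\bar{\mathsf{A}}_{l,n}$ carry a $\mathbb Z$-grading with $\deg(\bar t_{\alpha a})=1$ since the defining relation~\eqref{eq:DArlnsM} is homogeneous, and $\bar\Theta$ does not raise degree; so the statement is proved by induction on $(\deg\bar x,\deg\bar y)$ exactly as in Lemma~\ref{lem:braidmul}. The base cases $\deg\bar x=0$ or $\deg\bar y=0$ are~\eqref{eq:dtheta1a} and~\eqref{eq:dtheta1b}; the inductive step writes a degree $m+1$ element as a product and invokes the commutative diagrams~\eqref{eq:intwass1},~\eqref{eq:intwass2} for $\bar\Theta$ together with the fact that $\bar\iota_l,\bar\iota_0$ are algebra maps.

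The only genuinely computational point is the case $\deg\bar x=\deg\bar y=1$, i.e. $\bar x=\bar t_{\alpha a}$, $\bar y=\bar t_{\beta b}$: here one compares $\bar\sigma$ applied to the explicit braided commutation relation for $(1\otimes\bar t_{\beta b})(\bar t_{\alpha a}\otimes 1)$ displayed just before the theorem with the $R_-$-form of the defining relation of $\bar{\mathsf{A}}_{s+l,n}$, i.e.~\eqref{eq:barTpp}, for the pair of indices $(\beta,\alpha-l)$. The key observation is that, since $\bar\iota_l$ shifts the $\bar{\mathsf{A}}_{s,n}$-indices down by $l$ while $\bar\iota_0$ embeds $\bar{\mathsf{A}}_{l,n}$ onto the indices $-1,\dots,-l$, the two index ranges are disjoint and correctly ordered, so the Kronecker deltas in~\eqref{eq:barTpp} comparing an $\bar\iota_l$-index with an $\bar\iota_0$-index vanish and the two relations coincide --- exactly the mechanism by which $\delta_{k+i<j}$ dropped out in Lemma~\ref{lem:braidmul}.

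With that lemma in hand, $\bar\sigma$ is a homomorphism of superalgebras: on the ``pure'' products $(\bar x'\otimes 1)(\bar x\otimes\bar y)$ and $(\bar x\otimes\bar y)(1\otimes\bar y')$ this is immediate from $\bar\iota_l,\bar\iota_0$ being algebra homomorphisms, and on the crossed product $(1\otimes\bar y)(\bar x\otimes 1)=\bar\Theta(\bar y\otimes\bar x)$ it is precisely the dual braided-multiplication lemma, as in Theorem~\ref{thm:isoA}. Equivariance of $\bar\sigma$ under $\Uq(\mathfrak{q}_n)$ follows by the identical Sweedler-notation computation, using only that $\bar\iota_l,\bar\iota_0$ are $\Uq(\mathfrak{q}_n)$-supermodule maps and that $\bar{\mathsf{A}}_{s+l,n}$ is a $\Uq(\mathfrak{q}_n)$-supermodule superalgebra; this step is insensitive to the fact that $\bar\Phi$ is twisted by $S^{-1}$ rather than $S$. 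Finally, $\bar\sigma$ is surjective because $\bar{\mathsf{A}}_{s+l,n}$ is generated by the $\bar t_{\alpha a}$ with $\alpha=-1,\dots,-(s+l)$, all of which lie in its image, and injective because the PBW-type monomial bases of $\bar{\mathsf{A}}_{s,n}$ and $\bar{\mathsf{A}}_{l,n}$ (ordered monomials in the $\bar t_{\alpha a}$, the odd ones to the power $\leq 1$) multiply under $\bar\sigma$ to the corresponding basis of $\bar{\mathsf{A}}_{s+l,n}$, cf.~\cite{CW23}.

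I expect the main obstacle to be purely one of sign bookkeeping in the base case $\deg\bar x=\deg\bar y=1$ and, slightly, in verifying that the braided commutation relation displayed before the theorem is the correct consequence of~\eqref{eq:dtheta2}: the dual algebra carries the extra signs $(-1)^{|\alpha|+|b|}$ from~\eqref{eq:DQCA1}, the factor $(-1)^{|b|(|\alpha|+|b|)}$ in $\bar T$, and the $S^{-1}$-twisted action, so one must be careful that all of these cancel as in the $R_-$-presentation of Lemma~\ref{lem:DQCAP2}. Once this is checked, everything else is formally identical to the proof of Theorem~\ref{thm:isoA}, and the quantum Yang--Baxter equation for $S$ guarantees the consistency of the diagrams.
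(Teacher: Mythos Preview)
Your proposal is correct and follows exactly the approach the paper intends: the paper's own proof of Theorem~\ref{thm:isodqcsa} reads in its entirety ``The proof is similar to Theorem~\ref{thm:isoA}, we omit the details here,'' and what you have written is precisely a careful unpacking of that sentence, including the dual analogue of Lemma~\ref{lem:braidmul} and the index-disjointness check that makes the extra Kronecker deltas vanish. The only cosmetic point is that the monomial basis for $\bar{\mathsf{A}}_{s,n}$ used in the injectivity step is the dual analogue of \cite[Proposition~3.6.4]{BDK20} rather than something stated in \cite{CW23}.
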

\begin{proof}
The proof is similar to Theorem~\ref{thm:isoA}, we omit the details here.
\end{proof}

\begin{remark}
Fix $1\leq\alpha\leq s$, the $\mathbb{C}(q)$-vector sub-superspace of $\bar{\mathsf{A}}_{s,n}$ spanned by $t_{\alpha b}$ for $b\in I_{n|n}$ is isomorphic to the dual natural $\Uq(\mathfrak{q}_n)$-supermodule $V_q^*$. Hence, the dual quantum coordinate superalgebra $\bar{\mathsf{A}}_{1,n}$ is isomorphic to quantum supersymmetric algebra $\mathrm{S}_q(V^*)$.  Moreover, the superalgebra $\bar{\mathsf{A}}_{s,n}$ is isomorphic to the braided tensor product $\mathrm{S}_q(V^*)^{\otimes s}$ . 
\end{remark}

We also define a quantum analogue of the supersymmetric algebra $\mathrm{Sym}\left(\Pi(V^{*})^{\oplus l}\right)$ as follows:

\begin{definition}
$\bar{\mathsf{A}}_{l,n}^{\Pi}$ is defined as a unital associative superalgebra presented by generators $\bar{t}_{\alpha b}^{\pi}$ of parity $|b|+\bar{1}$ for $\alpha=-1,\ldots,-l$ and $b\in I_{n|n}$, and the relation
\begin{equation}
\check{\bar{T}}_-^{1[3]}\check{\bar{T}}_-^{2[3]}R_-^{12}=S_J^{12}\check{\bar{T}}_-^{2[3]}\check{\bar{T}}_-^{1[3]},
\label{eq:barApiRTT}
\end{equation}
where $\check{\bar{T}}_-=\sum\limits_{\alpha=-l}^{-1}\sum\limits_{b\in I_{n|n}}E_{b\alpha}\otimes \bar{t}_{\alpha b}^{\pi}$, the tensor matrices $R_-$ and $S_J$ are given in \eqref{eq:rnegative} and \eqref{eq:stilde}, respectively.
\end{definition}

\begin{proposition}
$\bar{\mathsf{A}}_{l,n}^{\Pi}$ is a $\Uq(\mathfrak{q}_n)$-supermodule superalgebra with the $\Uq(\mathfrak{q}_n)$-action $\bar{\Phi}^{\pi}$ determined by 
\begin{equation*}
L^{[2]3}\underset{\bar{\Phi}^{\pi}}{\cdot}\check{\bar{T}}_-^{1[2]}=\left(S^{-1}\right)^{13}\check{\bar{T}}_-^{1[2]}.\eqno{\qed}
\end{equation*}
\end{proposition}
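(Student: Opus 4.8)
The plan is to imitate, almost verbatim, the proof of the corresponding statement for $\mathsf{A}_{k,n}^{\Pi}$, since the only structural difference between $\bar{\mathsf{A}}_{l,n}^{\Pi}$ and $\mathsf{A}_{k,n}^{\Pi}$ is that the defining relation \eqref{eq:barApiRTT} has the $R$-matrix on the right (the ``dual'' convention) and uses $S_J$ on the left, exactly mirroring how \eqref{eq:DArlnsM} for $\bar{\mathsf{A}}_{l,n}$ relates to \eqref{eq:ArlnsM} for $\mathsf{A}_{r,n}$. First I would let $\bar{W}\subset\bar{\mathsf{A}}_{l,n}^{\Pi}$ be the $\mathbb{C}(q)$-subspace spanned by the generators $\bar{t}_{\alpha b}^{\pi}$, $\alpha=-1,\ldots,-l$, $b\in I_{n|n}$, and check that the assignment $L^{[2]3}\underset{\bar{\Phi}^{\pi}}{\cdot}\check{\bar{T}}_-^{1[2]}=(S^{-1})^{13}\check{\bar{T}}_-^{1[2]}$ defines a genuine $\Uq(\mathfrak{q}_n)$-module structure on $\bar{W}$; this amounts to verifying that it is compatible with the defining relation \eqref{eq:SLL} of $\Uq(\mathfrak{q}_n)$, i.e. that
\begin{equation*}
\bigl(L^{[2]3}L^{[2]4}S^{34}\bigr)\underset{\bar{\Phi}^{\pi}}{\cdot}\check{\bar{T}}_-^{1[2]}
=\bigl(S^{34}L^{[2]4}L^{[2]3}\bigr)\underset{\bar{\Phi}^{\pi}}{\cdot}\check{\bar{T}}_-^{1[2]},
\end{equation*}
and both sides reduce to $(S^{-1})^{13}(S^{-1})^{14}S^{34}\check{\bar{T}}_-^{1[2]}$ versus $S^{34}(S^{-1})^{14}(S^{-1})^{13}\check{\bar{T}}_-^{1[2]}$, which agree because $S$ satisfies the quantum Yang--Baxter equation (equivalently $S^{-1}$ does, in reversed order, cf. \eqref{eq:sinverse}).

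Next I would extend $\bar{\Phi}^{\pi}$ to the free associative superalgebra on the $\bar{t}_{\alpha b}^{\pi}$ by the supermodule-superalgebra rule (the dual/$\mathrm{cop}$ version of Definition~\ref{def:smsa} appropriate to a right-dual action, exactly as for $\bar{\mathsf{A}}_{l,n}$), and then the only remaining point is that $\bar{\Phi}^{\pi}_{L_{ab}}$ preserves the ideal generated by relation \eqref{eq:barApiRTT}. Using the superalgebra rule one computes
\begin{equation*}
L^{[3]4}\underset{\bar{\Phi}^{\pi}}{\cdot}\bigl(\check{\bar{T}}_-^{1[3]}\check{\bar{T}}_-^{2[3]}\bigr)
=(S^{-1})^{14}\check{\bar{T}}_-^{1[3]}(S^{-1})^{24}\check{\bar{T}}_-^{2[3]},
\end{equation*}
and similarly for $\check{\bar{T}}_-^{2[3]}\check{\bar{T}}_-^{1[3]}$; applying \eqref{eq:barApiRTT} then reduces the claim to the matrix identity
\begin{equation*}
(S^{-1})^{14}(S^{-1})^{24}S_J^{12}=S_J^{12}(S^{-1})^{24}(S^{-1})^{14},
\end{equation*}
which follows by conjugating the Yang--Baxter identity for $S^{-1}$ by $J^{\underline{2}}=1\otimes J\otimes1\otimes1$ and using $(J\otimes1)S=S(J\otimes1)$ hence $(J\otimes1)S^{-1}=S^{-1}(J\otimes1)$, in direct parallel with the $S_J^{12}S^{14}S^{24}=S^{24}S^{14}S_J^{12}$ computation in the proof of the $\mathsf{A}_{k,n}^{\Pi}$ proposition. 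This establishes that $\bar{\Phi}^{\pi}$ descends to $\bar{\mathsf{A}}_{l,n}^{\Pi}$ and makes it a $\Uq(\mathfrak{q}_n)$-supermodule superalgebra.

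I do not anticipate a genuine obstacle here; the statement is a routine ``dual, parity-shifted'' analogue of results already proved in the excerpt. The one place that needs mild care is bookkeeping of signs and of which braided-superalgebra convention ($\Uq$ vs.\ $\Uq^{\mathrm{cop}}$, and the corresponding placement of $(-1)$'s coming from $|\bar{t}_{\alpha b}^{\pi}|=|b|+\bar 1$) is the correct one for a \emph{dual} coordinate superalgebra carrying a \emph{parity-reversed} action; getting the commuting relations $(J\otimes1)S^{\pm1}=S^{\pm1}(J\otimes1)$ and the conjugation $S_J=(1\otimes J)S(1\otimes J)$ to interact correctly with the $S^{-1}$'s is the only computation worth writing out, and it goes through exactly as in the non-dual case. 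Hence the proof can legitimately be given in one or two lines referring back to the proof of the analogous proposition for $\mathsf{A}_{k,n}^{\Pi}$, with the substitution $S\rightsquigarrow S^{-1}$ (in reversed order) and $R_+\rightsquigarrow R_-$.
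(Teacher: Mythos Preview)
Your proposal is correct and is exactly the approach the paper intends: the paper gives no proof at all for this proposition (it is stated with a bare \qed), precisely because it is the direct dual analogue of the proposition for $\mathsf{A}_{k,n}^{\Pi}$, and your outline mirrors that proof step by step with the substitutions $S\rightsquigarrow S^{-1}$ and $R_+\rightsquigarrow R_-$.

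One small caution on bookkeeping: you hedge between the ordinary and the $\mathrm{cop}$ supermodule-superalgebra rule, but in the paper $\bar{\mathsf{A}}_{s,n}$ (and hence $\bar{\mathsf{A}}_{l,n}^{\Pi}$) is a $\Uq(\mathfrak{q}_n)$-supermodule superalgebra in the sense of Definition~\ref{def:smsa}(i), not the $\mathrm{cop}$ version (ii); the $\mathrm{cop}$ structure appears only for the auxiliary $\Uq(\mathfrak{q}_{r+k})$-action $\Psi$. Since you already say ``exactly as for $\bar{\mathsf{A}}_{l,n}$'', this does not affect the argument, but you should drop the ``$\mathrm{cop}$'' in your write-up.
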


\begin{proposition}
\label{prop:isobarApi}
There is an isomorphism of $\Uq(\mathfrak{q}_n)$-supermodule superalgebras
$$\bar{\tau}:\bar{\mathsf{A}}_{l,n}^{\Pi}\rightarrow\bar{\mathsf{A}}_{l,n},$$
such that
\begin{equation*}
\bar{\tau}\left(\check{\bar{T}}_-\right)=(J\otimes1)\bar{T}_-.
\eqno{\qed}
\end{equation*}
\end{proposition}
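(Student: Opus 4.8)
The plan is to mirror the proof of Proposition~\ref{prop:isoApi} almost verbatim, replacing the quantum coordinate data by its dual counterpart. The key structural fact is that the defining relation \eqref{eq:barApiRTT} for $\bar{\mathsf{A}}_{l,n}^{\Pi}$ differs from the relation \eqref{eq:DArlnsM} for $\bar{\mathsf{A}}_{l,n}$ (in the presentation of Lemma~\ref{lem:DQCAP2}) only in that the left-hand $S^{12}$ is replaced by $S_J^{12}$, exactly as \eqref{eq:ApiRTT} differs from \eqref{eq:ArlnsM}. So the first step is to check that the assignment $\check{\bar{T}}_-\mapsto (J\otimes1)\bar{T}_-$ sends the left-hand side of \eqref{eq:barApiRTT} to the left-hand side of \eqref{eq:DArlnsM} and likewise for the right-hand sides, up to the same factor of $J$'s on both sides. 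Writing $J^{\underline{1}}=J\otimes1\otimes1$ and $J^{\underline{2}}=1\otimes J\otimes1$ acting in the first two (matrix) tensor slots, the image of the left side of \eqref{eq:barApiRTT} is $J^{\underline{1}}\bar{T}_-^{1[3]}J^{\underline{2}}\bar{T}_-^{2[3]}R_-^{12}=J^{\underline{1}}J^{\underline{2}}\bar{T}_-^{1[3]}\bar{T}_-^{2[3]}R_-^{12}=J^{\underline{1}}J^{\underline{2}}S^{12}\bar{T}_-^{2[3]}\bar{T}_-^{1[3]}$ by \eqref{eq:DArlnsM}, while the image of the right side is $S_J^{12}J^{\underline{2}}\bar{T}_-^{2[3]}J^{\underline{1}}\bar{T}_-^{1[3]}=S_J^{12}J^{\underline{2}}J^{\underline{1}}\bar{T}_-^{2[3]}\bar{T}_-^{1[3]}$. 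So the relation is preserved once we verify the purely matrix identity $J^{\underline{1}}J^{\underline{2}}S^{12}=S_J^{12}J^{\underline{2}}J^{\underline{1}}$, which follows from $S_J=(1\otimes J)S(1\otimes J)$, the commutation $(J\otimes1)S=S(J\otimes1)$ (equivalently $J^{\underline{1}}S^{12}=S^{12}J^{\underline{1}}$), and $J^{\underline{1}}J^{\underline{2}}=J^{\underline{2}}J^{\underline{1}}$; this is the same computation as the display $J^{\underline{2}}J^{\underline{1}}S_J^{12}=S^{12}J^{\underline{1}}J^{\underline{2}}$ in the proof of Proposition~\ref{prop:isoApi}, only with the roles of the two matrix slots interchanged. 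Hence $\bar{\tau}$ is a well-defined superalgebra homomorphism.

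Second, I would exhibit the inverse. Since $J^2=-\mathrm{id}$ on $V_q$ (because $J=\sum_a(-1)^{|a|}E_{-a,a}$ satisfies $J^2=\sum_a(-1)^{|a|}(-1)^{|{-a}|}E_{aa}=-\mathrm{id}$), the map $\bar{T}_-\mapsto -(J\otimes1)\check{\bar{T}}_-$ is a two-sided inverse; one checks by the same matrix manipulation that it respects \eqref{eq:DArlnsM}, and composing the two assignments recovers the identity on generators because $(J\otimes1)(J\otimes1)=J^{\underline{1}}J^{\underline{1}}=-1$. Thus $\bar{\tau}$ is a superalgebra isomorphism; note also that $\check{\bar{T}}_-$ has entries of parity $|b|+\bar 1$ while the extra $J$ shifts parity by $\bar 1$, so $\bar{\tau}$ is parity-preserving, as it must be.

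Third, I would check $\Uq(\mathfrak{q}_n)$-equivariance on generators. The actions are $L^{[2]3}\cdot\check{\bar{T}}_-^{1[2]}=(S^{-1})^{13}\check{\bar{T}}_-^{1[2]}$ on $\bar{\mathsf{A}}_{l,n}^{\Pi}$ and $L^{[2]3}\cdot\bar{T}_-^{1[2]}=(S^{-1})^{13}\bar{T}_-^{1[2]}$ on $\bar{\mathsf{A}}_{l,n}$. Applying $\bar{\tau}$ to the first gives $(S^{-1})^{13}J^{\underline 1}\bar{T}_-^{1[2]}$, while acting by $L$ on $\bar{\tau}(\check{\bar{T}}_-^{1[2]})=J^{\underline 1}\bar{T}_-^{1[2]}$ gives $J^{\underline 1}(S^{-1})^{13}\bar{T}_-^{1[2]}$; these agree because $J$ acts in the second matrix slot (the representation slot labelled $2$ here, i.e.\ $J^{\underline 1}$ in that display sits in slot $2$) while $(S^{-1})^{13}$ acts in slots $1$ and $3$, so they commute — this is the analogue of $J^{\underline 1}S^{13}=S^{13}J^{\underline 1}$ used at the end of Proposition~\ref{prop:isoApi}. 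Since $\bar{\mathsf{A}}_{l,n}^{\Pi}$ and $\bar{\mathsf{A}}_{l,n}$ are both $\Uq(\mathfrak{q}_n)$-supermodule superalgebras and $\bar{\tau}$ is an algebra map intertwining the action on generators, it intertwines the action everywhere. The only place requiring care is keeping track of which tensor slot each $J$ occupies and the sign coming from $J^2=-1$ — that bookkeeping is the main (and only mild) obstacle; everything else is formally identical to the already-proved Proposition~\ref{prop:isoApi}, so I would simply write ``the proof is parallel to that of Proposition~\ref{prop:isoApi}'' after recording the one matrix identity $J^{\underline 1}J^{\underline 2}S^{12}=S_J^{12}J^{\underline 2}J^{\underline 1}$.
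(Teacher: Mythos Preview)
Your approach is exactly what the paper intends: Proposition~\ref{prop:isobarApi} is stated with a bare $\qed$, signalling that the argument is parallel to Proposition~\ref{prop:isoApi}, and your outline follows that template correctly, with the right key identity $J^{\underline{1}}J^{\underline{2}}S^{12}=S_J^{12}J^{\underline{2}}J^{\underline{1}}$ and the right equivariance check.

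Two small slips in your justifications (not in your conclusions): first, $J^{\underline{1}}$ and $J^{\underline{2}}$ \emph{anti}-commute since $J$ is odd --- this is the minus sign in the display $J^{\underline{2}}J^{\underline{1}}J^{\underline{2}}S^{12}J^{\underline{2}}=-J^{\underline{2}}J^{\underline{2}}S^{12}J^{\underline{1}}J^{\underline{2}}$ in the proof of Proposition~\ref{prop:isoApi}; your identity still holds because that sign cancels against $J^{\underline{2}}J^{\underline{2}}=-1$. Second, in the equivariance step $J^{\underline{1}}$ and $(S^{-1})^{13}$ both act in slot~$1$, so they do not commute for the ``disjoint slots'' reason you give; the correct reason is $(J\otimes1)S=S(J\otimes1)$, hence $(J\otimes1)S^{-1}=S^{-1}(J\otimes1)$, exactly as at the end of the proof of Proposition~\ref{prop:isoApi}.
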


\section {Polynomial Invariants}\label{sec:invariant}

In the quantum case, the associative superalgebras $\mathsf{A}_{r,n}$, $\mathsf{A}_{k,n}^{\Pi}$, $\bar{\mathsf{A}}_{s,n}$ and $\bar{\mathsf{A}}_{l,n}^{\Pi}$ are quantum analogues of the supersymmetric algebras $\mathrm{Sym}\left(V^{\oplus r}\right)$, $\mathrm{Sym}\left(\Pi(V)^{\oplus k}\right)$, $\mathrm{Sym}\left(V^{*\oplus s}\right)$ and $\mathrm{Sym}\left(\Pi(V^*)^{\oplus l}\right)$, respectively. We consider the $\Uq(\mathfrak{q}_n)$-supermodule
$$\mathcal{B}^{r,k}_{s,l}:=\mathsf{A}_{r,n}\otimes\mathsf{A}_{k,n}^{\Pi}\otimes\bar{\mathsf{A}}_{s,n}\otimes\bar{\mathsf{A}}_{l,n}^{\Pi}.$$

By Proposition~\ref{prop:isoApi}, $\mathsf{A}_{k,n}^{\Pi}$ is isomorphic to $\mathsf{A}_{k,n}$. Then $\mathsf{A}_{r,n}\otimes\mathsf{A}_{k,n}^{\Pi}$ is a $\Uq(\mathfrak{q}_n)$-supermodule superalgebra under the braided multiplication. It follows from Theorem~\ref{thm:isoA} that $\mathsf{A}_{r,n}\otimes\mathsf{A}_{k,n}^{\Pi}$ is also isomorphic to $\mathsf{A}_{r+k,n}$ as $\Uq(\mathfrak{q}_n)$-supermodule superalgebras.

Recall from \cite[Section~2]{CW23} that 
the associative superalgbera $\mathsf{A}_{r+k,n}$ is also a $\Uq(\mathfrak{q}_{r+k})^{\mathrm{cop}}$-supermodule superalgebra with the action $\Psi$ determined by
\begin{equation}
L^{[2]3}\underset{\Psi}{\cdot}T^{1[2]}=\left(S^{-1}\right)^{13}T^{1[2]},
\end{equation}
where $L$ is the generator matrix of $\Uq(\mathfrak{q}_{r+k})$, $T$ is the generator matrix of $\mathsf{A}_{r+k,n}$, and $S^{-1}$ is given in \eqref{eq:sinverse}. Moreover, the $\Uq(\mathfrak{q}_{r+k})$-action $\Psi$ and the $\Uq(\mathfrak{q}_n)$-action $\Phi$ on $\mathsf{A}_{r+k,n}$ are super-commutative. Thus $\mathsf{A}_{r+k,n}$ is a $\Uq(\mathfrak{q}_{r+k})\otimes \Uq(\mathfrak{q}_n)$-supermodule. Howe duality \cite[Theorem~4.2]{CW20} implies that $\mathsf{A}_{r+k,n}$ admits a multiplicity-free decomposition as a $\Uq(\mathfrak{q}_{r+k})\otimes \Uq(\mathfrak{q}_n)$-supermodule.

Similarly, we deduce from Theorem~\ref{thm:isodqcsa} and Proposition~\ref{prop:isobarApi} that the braided tensor product $\bar{\mathsf{A}}_{s,n}\otimes\bar{\mathsf{A}}_{l,n}^{\Pi}$  is isomorphic to $\bar{\mathsf{A}}_{s+l,n}$ as $\Uq(\mathfrak{q}_n)$-supermodule superalgebras. 

Besides being a $\Uq(\mathfrak{q}_n)$-supermodule, the superalgebra $\bar{\mathsf{A}}_{s+l,n}$ is a $\Uq(\mathfrak{q}_{s+l})$-supermodule under the action $\bar{\Psi}$ given by
\begin{equation}
L^{[2]3}\underset{\bar{\Psi}}{\cdot}\bar{T}^{1[2]}=\bar{T}^{1[2]} \tilde{S}^{13},
\end{equation}
where $L$ is the generator matrix of $\Uq(\mathfrak{q}_{s+l})$, $\bar{T}$ is the generator matrix of $\bar{\mathsf{A}}_{s+l,n}$, and $\tilde{S}=(1\otimes D)S(1\otimes D^{-1})$ with $D=\sum\limits_{\alpha=1}^{s+l}q^{2\alpha}(E_{\alpha\alpha}+E_{-\alpha,-\alpha})$. 
The $\Uq(\mathfrak{q}_{s+l})$-action $\bar{\Psi}$ on $\bar{\mathsf{A}}_{s+l,n}$ super-commutes with the $\Uq(\mathfrak{q}_n)$-action $\bar{\Phi}$.  As a $\Uq(\mathfrak{q}_n)\otimes\Uq(\mathfrak{q}_{s+l})$-supermodule, $\bar{\mathsf{A}}_{s+l,n}$ also admits a multiplicity-free decomposition.

Using the braided tensor product $\mathcal{B}^{r+k,0}_{s+l,0}=\mathsf{A}_{r+k,n}\otimes\bar{\mathsf{A}}_{s+l,n}$ introduced in \cite[Section~4]{CW23}, the $\Uq(\mathfrak{q}_n)$-supermodule $\mathcal{B}^{r,k}_{s,l}$ is also an associative superalgebra with respect to the braided multiplication. Moreover, it is a $\Uq(\mathfrak{q}_n)$-supermodule superalgebra. By \cite[Theorem~5.10]{CW23}, we conclude that
\begin{theorem}
The $\Uq(\mathfrak{q}_n)$-invariant sub-superalgebra of $\mathcal{B}^{r,k}_{s,l}$ 
$$\left(\mathcal{B}^{r,k}_{s,l}\right)^{\Uq(\mathfrak{q}_n)}:=\left\{z\in\mathcal{B}^{r,k}_{s,l}|\Phi_u(z)=\varepsilon(u)z, \forall u\in\Uq(\mathfrak{q}_n)\right\}$$
is generated by the following elements:
\begin{align*}
x_{i\alpha}=&\sum_{p\in I_{n|n}}t_{ip}\otimes1\otimes \bar{t}_{\alpha p}\otimes1,&
y_{j\alpha}=&\sum_{p\in I_{n|n}}1\otimes t_{j,-p}^{\pi}\otimes \bar{t}_{\alpha p}\otimes1,\\
z_{i\beta}=&\sum_{p\in I_{n|n}}(-1)^{|p|}t_{ip}\otimes1\otimes 1\otimes\bar{t}_{\beta,-p}^{\pi},&
w_{j\beta}=&\sum_{p\in I_{n|n}}(-1)^{|p|}1\otimes t_{jp}^{\pi}\otimes 1\otimes\bar{t}_{\beta p}^{\pi},
\end{align*}
for $i=1,\ldots, r$, $j=1,\ldots, k$, $\alpha=1,\ldots,s$ and $\beta=-1,\ldots, -l$.\qed
\end{theorem}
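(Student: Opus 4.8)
The plan is to transport the whole problem to the special case $\mathcal{B}^{r+k,0}_{s+l,0}$ already settled in \cite[Theorem~5.10]{CW23}. Combining Theorem~\ref{thm:isoA} with Proposition~\ref{prop:isoApi} yields an isomorphism $\mathsf{A}_{r,n}\otimes\mathsf{A}_{k,n}^{\Pi}\xrightarrow{\ \sim\ }\mathsf{A}_{r+k,n}$, and Theorem~\ref{thm:isodqcsa} with Proposition~\ref{prop:isobarApi} yields $\bar{\mathsf{A}}_{s,n}\otimes\bar{\mathsf{A}}_{l,n}^{\Pi}\xrightarrow{\ \sim\ }\bar{\mathsf{A}}_{s+l,n}$; taking their tensor product gives an isomorphism
$$F\colon\ \mathcal{B}^{r,k}_{s,l}=\mathsf{A}_{r,n}\otimes\mathsf{A}_{k,n}^{\Pi}\otimes\bar{\mathsf{A}}_{s,n}\otimes\bar{\mathsf{A}}_{l,n}^{\Pi}\ \xrightarrow{\ \sim\ }\ \mathsf{A}_{r+k,n}\otimes\bar{\mathsf{A}}_{s+l,n}=\mathcal{B}^{r+k,0}_{s+l,0}$$
(namely $F=\bigl(\sigma\circ(\mathrm{id}\otimes\tau)\bigr)\otimes\bigl(\bar{\sigma}\circ(\mathrm{id}\otimes\bar{\tau})\bigr)$). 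Since the braided multiplication on $\mathcal{B}^{r,k}_{s,l}$ is, by construction, the one obtained by transporting along $F$ the braided tensor product structure of \cite[Section~4]{CW23}, and each of $\sigma,\tau,\bar{\sigma},\bar{\tau}$ is $\Uq(\mathfrak{q}_n)$-equivariant, $F$ is an isomorphism of $\Uq(\mathfrak{q}_n)$-supermodule superalgebras. In particular it restricts to an isomorphism $\bigl(\mathcal{B}^{r,k}_{s,l}\bigr)^{\Uq(\mathfrak{q}_n)}\xrightarrow{\ \sim\ }\bigl(\mathcal{B}^{r+k,0}_{s+l,0}\bigr)^{\Uq(\mathfrak{q}_n)}$, and $F^{-1}$ carries any generating set of the latter to a generating set of the former.

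By \cite[Theorem~5.10]{CW23}, $\bigl(\mathcal{B}^{r+k,0}_{s+l,0}\bigr)^{\Uq(\mathfrak{q}_n)}$ is generated by the ``contraction'' elements $\sum_{p\in I_{n|n}}t_{m,p}\otimes\bar{t}_{\gamma,p}$, one for each $m=1,\ldots,r+k$ and $\gamma=1,\ldots,s+l$. To pull these back I would first make $F$ explicit on generators: from $\tau(\check{T}_+)=T_+(J\otimes1)$ with $J=\sum_a(-1)^{|a|}E_{-a,a}$ one reads off $\tau(t_{jb}^{\pi})=(-1)^{|b|}t_{j,-b}$, and from $\bar{\tau}(\check{\bar{T}}_-)=(J\otimes1)\bar{T}_-$ the analogous formula expressing $\bar{\tau}(\bar{t}_{\beta b}^{\pi})$ as a sign times $\bar{t}_{\beta,-b}$; composing with the embeddings $\iota_k,\iota_0$ of Lemma~\ref{lem:embed} and $\bar{\iota}_l,\bar{\iota}_0$ of Lemma~\ref{lem:dembed} gives, for $i=1,\ldots,r$, $j=1,\ldots,k$, $\alpha=-1,\ldots,-s$, $\beta=-1,\ldots,-l$,
$$F(t_{ia}\otimes1\otimes1\otimes1)=t_{k+i,a},\qquad F(1\otimes t_{jb}^{\pi}\otimes1\otimes1)=(-1)^{|b|}t_{j,-b},$$
$$F(1\otimes1\otimes\bar{t}_{\alpha a}\otimes1)=\bar{t}_{\alpha-l,a},\qquad F(1\otimes1\otimes1\otimes\bar{t}_{\beta b}^{\pi})=\pm(-1)^{|b|}\bar{t}_{\beta,-b}.$$

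Since $F$ is multiplicative and, in the braided tensor product, $x_1\otimes x_2\otimes x_3\otimes x_4$ is the product of the four elementary tensors, I would then compute $F$ of each contraction element one factor at a time, splitting the index $m$ into the cases $m=k+i$ ($1\le i\le r$) and $m=j$ ($1\le j\le k$), and the index $\gamma$ into the block $\gamma=l+\alpha$ ($1\le\alpha\le s$) coming from $\bar{\mathsf{A}}_{s,n}$ and the block $\gamma=\beta'$ ($1\le\beta'\le l$, $\beta=-\beta'$) coming from $\bar{\mathsf{A}}_{l,n}^{\Pi}$. After rewriting the four resulting expressions by means of the reality relations \eqref{eq:QCA1}, \eqref{eq:DQCA1} and their $\Pi$-analogues, I expect them to come out as exactly $x_{i\alpha}$, $y_{j\alpha}$, $z_{i\beta}$ and $w_{j\beta}$, respectively. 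Since $F^{-1}$ of a generating set is again a generating set, this proves the theorem.

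The main obstacle is precisely this last bookkeeping of signs and indices. The $J$-twists concealed in $\tau$ and $\bar{\tau}$ simultaneously produce the index negation $b\mapsto-b$ and parity signs, and to verify that all auxiliary signs cancel one must carefully combine $|-b|=|b|+\bar{1}$, the fact that $\mathsf{A}_{*,n}$ and $\bar{\mathsf{A}}_{*,n}$ obey different reality relations, the index shifts built into $\iota_k$ and $\bar{\iota}_l$, and the precise sign normalization of the generators in \cite[Theorem~5.10]{CW23}. Everything else — the existence and $\Uq(\mathfrak{q}_n)$-equivariance of $F$, and the preservation of invariants and of generating sets under an isomorphism — is immediate from the results of the earlier sections.
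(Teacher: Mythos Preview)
Your proposal is correct and follows exactly the approach the paper takes: the paper's proof consists solely of the sentence ``By \cite[Theorem~5.10]{CW23}, we conclude that'' preceded by the observation that the isomorphisms of Theorem~\ref{thm:isoA}, Proposition~\ref{prop:isoApi}, Theorem~\ref{thm:isodqcsa} and Proposition~\ref{prop:isobarApi} identify $\mathcal{B}^{r,k}_{s,l}$ with $\mathcal{B}^{r+k,0}_{s+l,0}$ as $\Uq(\mathfrak{q}_n)$-supermodule superalgebras. Your write-up is in fact more detailed than the paper's, since you spell out the map $F$ on generators and isolate the sign/index bookkeeping as the only remaining verification; the paper leaves all of that implicit.
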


\section*{Acknowledgments}
The project is supported by the National Natural Science Foundation of China (Nos. 12071150 and 12071026), the Science and Technology Planning Project of Guangzhou (No. 202102021204).

\end{document}